\theoremstyle{plain}
\newtheorem{thm}{Theorem}[section]
\newtheorem*{thm*}{Theorem}
\newtheorem*{mainthm*}{Main Theorem}
\newtheorem{lem}[thm]{Lemma} \newtheorem*{lem*}{Lemma}
 \newtheorem*{claim*}{Claim}
\newtheorem{cor}[thm]{Corollary} \newtheorem*{cor*}{Corollary}
 \newtheorem*{prop*}{Proposition}
\theoremstyle{definition}
\newtheorem{defn}[thm]{Definition} \newtheorem*{defn*}{Definition}
\theoremstyle{remark}
\newtheorem{rem}[thm]{Remark} \newtheorem*{rem*}{Remark}
 \newtheorem*{example*}{Example}
\newtheorem{question}[thm]{Question} \newtheorem*{question*}{Question}
\newcommand{\Ord}{\mathrm{Ord}}
\newcommand{\bfPi}{\utilde{\bf{\Pi}}}
\DeclareMathOperator{\Con}{Con}
\DeclareMathOperator{\Col}{Col}
\DeclareMathOperator{\crit}{crit}
\DeclareMathOperator{\ran}{range}
\DeclareMathOperator{\rank}{rank}
\begin{document}

\author{Trevor M.\ Wilson}
\title{Weakly remarkable cardinals, Erd\H{o}s cardinals, and the generic Vop\v{e}nka principle}

\address{Department of Mathematics\\Miami University\\Oxford, Ohio 45056\\USA}
\email{twilson@miamioh.edu} 
\urladdr{https://www.users.miamioh.edu/wilso240}

\begin{abstract}
 We consider a weak version of Schindler's remarkable cardinals that may fail to be $\Sigma_2$-reflecting. We show that the $\Sigma_2$-reflecting weakly remarkable cardinals are exactly the remarkable cardinals, and we show that the existence of a non-$\Sigma_2$-reflecting weakly remarkable cardinal has higher consistency strength: it is equiconsistent with the existence of an $\omega$-Erd\H{o}s cardinal. We give an application involving gVP, the generic Vop\v{e}nka principle defined by Bagaria, Gitman, and Schindler. Namely, we show that gVP + ``Ord is not $\Delta_2$-Mahlo'' and $\text{gVP}({\bfPi}_1)$ + ``there is no proper class of remarkable cardinals'' are both equiconsistent with the existence of a proper class of $\omega$-Erd\H{o}s cardinals, extending results of Bagaria, Gitman, Hamkins, and Schindler.
\end{abstract}

\maketitle

\section{Remarkability and weak remarkability}

Many large cardinal properties can be defined in terms of elementary embeddings between set-sized structures. For example, extendibility is defined in terms of elementary embeddings between rank initial segments of $V$, and supercompactness admits a similar characterization by Magidor \cite[Theorem 1]{MagCombinatorialCharacterization}. Any large cardinal property defined in this way can be \emph{virtualized} by weakening the existence of an elementary embedding to the existence of a \emph{generic elementary embedding}, meaning an elementary embedding that exists in some generic extension of $V$ (and whose domain and codomain are in $V$.) The large cardinal properties obtained in this way are known as \emph{virtual large cardinal properties} (see Gitman and Schindler \cite{GitSchVirtualLargeCardinals}.) The first virtual large cardinals to be studied were the virtually supercompact cardinals, also known as the remarkable cardinals:

\begin{defn}[Schindler\footnote{Schindler \cite{SchProperRemarkable} originally gave another definition that did not involve forcing but was otherwise more complicated. See Bagaria, Gitman, and Schindler \cite[Proposition 2.4]{BagGitSchGenericVopenka} for several equivalent forms of remarkability.}]
 A cardinal $\kappa$ is \emph{remarkable} if for every ordinal $\lambda > \kappa$ there is an ordinal $\bar{\lambda}<\kappa$ and a generic elementary embedding $j:V_{\bar{\lambda}} \to V_\lambda$ such that $j(\crit(j)) = \kappa$.
\end{defn}

We will consider a weak form of remarkability obtained by removing the condition $\bar{\lambda}<\kappa$,
analogous to the weak form of virtual extendibility defined by Gitman and Hamkins \cite[Definition 6]{GitHamOrdNotDelta2Mahlo}.  We work in ZFC unless otherwise stated.

\begin{defn}
 A cardinal $\kappa$ is \emph{weakly remarkable} if for every ordinal $\lambda > \kappa$ there is an ordinal $\bar{\lambda}$ and a generic elementary embedding $j:V_{\bar{\lambda}} \to V_\lambda$ such that $j(\crit(j)) = \kappa$.
\end{defn}

In terms of consistency strength, remarkable cardinals and weakly remarkable cardinals are between ineffable cardinals and $\omega$-Erd\H{o}s cardinals. If there is an $\omega$-Erd\H{o}s cardinal then there is a transitive set model of ZFC + ``there is a remarkable cardinal'' by Schindler \cite[Lemma 1.2]{SchProperForcingRemarkableII}, and if $\kappa$ is weakly remarkable then by taking $\lambda = \kappa + 1$ in the definition one can easily show that $\kappa$ is ineffable and $V_\kappa$ satisfies ZFC + ``$\crit(j)$ is ineffable.''

The consistency strength of remarkable cardinals and weakly remarkable cardinals can be described more precisely in terms of the hierarchy of $\alpha$-iterable cardinals defined by Gitman \cite{GitRamsey}: they are between 1-iterable cardinals and 2-iterable cardinals. See Gitman and Welch \cite{GitWelRamseyII} for more information on $\alpha$-iterable cardinals.

A cardinal $\kappa$ is called \emph{$\Sigma_n$-reflecting} if it is inaccessible and $V_\kappa \prec_{\Sigma_n} V$. This definition is particularly natural in the case $n = 2$: the $\Sigma_2$ statements about a parameter $x$ are the statements that can be expressed in the form ``there is an ordinal $\lambda$ such that $V_\lambda \models \varphi[x]$'' where $\varphi$ is a formula in the language of set theory, so a cardinal $\kappa$ is $\Sigma_2$-reflecting if and only if it is inaccessible and for every formula $\varphi$ in the language of set theory, every ordinal $\lambda$, and every set $x \in V_\kappa$, if $V_\lambda \models \varphi[x]$ then $V_{\bar{\lambda}} \models \varphi[x]$ for some ordinal $\bar{\lambda} < \kappa$.

If $\kappa$ is a remarkable cardinal then for every ordinal $\lambda > \kappa$ and every set $x \in V_\lambda$ there is an ordinal $\bar{\lambda}<\kappa$ and a generic elementary embedding $j:V_{\bar{\lambda}} \to V_\lambda$ such that $j(\crit(j)) = \kappa$ and having the additional property that $x \in \ran(j)$: see Bagaria, Gitman, and Schindler \cite[Propositions 2.3 and 3.2]{BagGitSchGenericVopenka}. The same argument establishes the corresponding fact without the condition $\bar{\lambda}<\kappa$ for weakly remarkable cardinals. Note that in the case $x \in V_\kappa$, every generic elementary embedding $j:V_{\bar{\lambda}} \to V_\lambda$ such that $j(\crit(j)) = \kappa$ and $x \in \ran(j)$ must fix $x$. This implies that every remarkable cardinal is $\Sigma_2$-reflecting, but because the definition of weak remarkability lacks the condition $\bar{\lambda}<\kappa$ we cannot similarly conclude that every weakly remarkable cardinal is $\Sigma_2$-reflecting.

The following result, proved in Section \ref{sec:proof-of-main-thm}, says that the $\Sigma_2$-reflecting weakly remarkable cardinals are precisely the remarkable cardinals:

\begin{thm}\label{thm:weakly-remarkable-and-sigma-2-reflecting}
 For every cardinal $\kappa$, the following statements are equivalent.
 \begin{samepage}
 \begin{enumerate}
  \item $\kappa$ is remarkable.
  \item $\kappa$ is weakly remarkable and $\Sigma_2$-reflecting.
 \end{enumerate}
 \end{samepage}
\end{thm}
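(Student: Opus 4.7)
The direction $(1)\Rightarrow(2)$ is essentially contained in the discussion preceding the statement. Weak remarkability follows by dropping the bound on $\bar\lambda$. For $\Sigma_2$-reflection, given a $\Sigma_2$ statement $\exists\lambda\,(V_\lambda\models\varphi[x])$ with $x\in V_\kappa$ true in $V$, I apply remarkability at a witness $\lambda>\kappa$ arranging $x\in\ran(j)$: since $\rank(x)<\kappa=j(\crit(j))$, elementarity forces the preimage of $x$ to have rank below $\crit(j)$ and hence to be fixed by $j$, so $j(x)=x$ and by elementarity $V_{\bar\lambda}\models\varphi[x]$ for some $\bar\lambda<\kappa$.

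For the harder direction $(2)\Rightarrow(1)$ I would argue by induction on $\kappa$, assuming the implication for all smaller cardinals. Fix $\lambda>\kappa$. By Levy reflection choose $\lambda^*>\lambda$ with $V_{\lambda^*}\prec_{\Sigma_2}V$. Weak remarkability together with the range fact produces a generic elementary embedding $j^*:V_{\bar\lambda^*}\to V_{\lambda^*}$ with $\crit(j^*)=\mu^*$, $j^*(\mu^*)=\kappa$, and $V_\lambda\in\ran(j^*)$. Setting $\bar\alpha=(j^*)^{-1}(\lambda)$, the restriction $j^*\upharpoonright V_{\bar\alpha}$ is a generic elementary embedding $V_{\bar\alpha}\to V_\lambda$ with critical point $\mu^*$ mapping to $\kappa$; if $\bar\alpha<\kappa$ we are done, so I suppose $\bar\alpha\geq\kappa$.

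I would then transfer both hypotheses from $\kappa$ to $\mu^*$ in $V$. Since ``$\kappa$ is $\Sigma_2$-reflecting'' is $\Pi_2$ with parameter $\kappa\in V_{\lambda^*}$ and $V_{\lambda^*}\prec_{\Sigma_2}V$, this property holds in $V_{\lambda^*}$, so elementarity of $j^*$ gives $V_{\mu^*}\prec_{\Sigma_2}V_{\bar\lambda^*}$. Since $j^*$ fixes parameters in $V_{\mu^*}$ while $V_{\lambda^*}\prec_{\Sigma_2}V$, $V_{\bar\lambda^*}$ is $\Sigma_2$-elementary in $V$ on parameters from $V_{\mu^*}$; combined with $V_{\mu^*}\prec_{\Sigma_2}V_{\bar\lambda^*}$, this yields $V_{\mu^*}\prec_{\Sigma_2}V$. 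For weak remarkability of $\mu^*$ in $V$, targets $\lambda'<\bar\lambda^*$ are handled by $V_{\bar\lambda^*}$'s internal weak remarkability of $\mu^*$ together with absoluteness of generic elementary embeddings (via collapse to countable), and targets $\lambda'\geq\bar\lambda^*$ are reached by iterating the construction at a larger $\Sigma_2$-correct $\lambda^{**}>\lambda'$, applying the range fact so that the new critical point $\mu^{**}$ strictly dominates $\mu^*$ and hence fixes it. With $\mu^*$ weakly remarkable and $\Sigma_2$-reflecting in $V$, the inductive hypothesis gives that $\mu^*$ is remarkable in $V$. Applying this at target $\bar\alpha$ produces $\bar\gamma<\mu^*$ and a generic embedding $V_{\bar\gamma}\to V_{\bar\alpha}$ with critical point mapping to $\mu^*$; composing with $j^*\upharpoonright V_{\bar\alpha}$ yields a generic elementary embedding $V_{\bar\gamma}\to V_\lambda$ with critical point mapping to $\kappa$, and $\bar\gamma<\mu^*<\kappa$ completes the induction.

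The step I anticipate as most delicate is the transfer of weak remarkability of $\mu^*$ to $V$ at targets beyond $\bar\lambda^*$, since this property is $\Pi_2$ and does not transfer automatically from $V_{\bar\lambda^*}$ to $V$. The $\Sigma_2$-reflection hypothesis on $\kappa$ is what powers the iteration of auxiliary embeddings, and arranging the range facts so that each auxiliary fixes $\mu^*$ without circular dependence on the target $\lambda'$ being verified is the main technical point I would need to make precise.
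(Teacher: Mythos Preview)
Your direction $(1)\Rightarrow(2)$ is fine. In $(2)\Rightarrow(1)$ the gap you flag is real and, as far as I can see, cannot be closed along the lines you sketch. Your auxiliary embedding $j^{**}:V_{\bar\lambda^{**}}\to V_{\lambda^{**}}$ has critical point $\mu^{**}$, and even if you arrange $\mu^*\in\ran(j^{**})$ so that $\mu^*<\mu^{**}$ and $j^{**}(\mu^*)=\mu^*$, nothing in the setup tells you that $\mu^*$ is weakly $\lambda'$-remarkable: elementarity of $j^{**}$ transfers statements between $V_{\bar\lambda^{**}}$ and $V_{\lambda^{**}}$, but you have no prior information that $\mu^*$ is weakly $\lambda'$-remarkable in \emph{either} of those structures. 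What you do learn is that $\mu^{**}$ is weakly remarkable in $V_{\bar\lambda^{**}}$ (pulled back from $\kappa$ in $V_{\lambda^{**}}$), and this says nothing about the fixed ordinal $\mu^*$. Iterating just produces new critical points $\mu^{**},\mu^{***},\ldots$, not evidence about your original $\mu^*$; so the induction on $\kappa$ never gets off the ground.

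The fix is to induct on the target $\lambda$ rather than on $\kappa$. Assume $\kappa$ is ${<}\lambda$-remarkable, take a single weak-remarkability embedding $j:V_{\bar\lambda+\omega}\to V_{\lambda+\omega}$ with $j(\bar\kappa)=\kappa$, $j(\bar\lambda)=\lambda$, and in the interesting case $\bar\lambda\ge\kappa$ pull the inductive hypothesis back: $\bar\kappa$ is ${<}\bar\lambda$-remarkable, hence ${<}\kappa$-remarkable, i.e.\ $\bar\kappa$ is remarkable \emph{in $V_\kappa$}. Now use $\Sigma_2$-reflection once, in the upward direction: remarkability is $\Pi_2$, and $V_\kappa\prec_{\Sigma_2}V$ forces $\bar\kappa$ to be remarkable in $V$. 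In particular $\bar\kappa$ is $\bar\lambda$-remarkable, and pushing this forward along $j$ gives that $\kappa$ is $\lambda$-remarkable. The point is that this route never asks for weak remarkability of $\bar\kappa$ at targets above $\kappa$: you only need (full) remarkability of $\bar\kappa$ \emph{below} $\kappa$, which the accumulated inductive hypothesis hands you for free, and a single application of $\Sigma_2$-reflection globalizes it. This also eliminates the need for your auxiliary $\Sigma_2$-correct $\lambda^*$ and the composition of generic embeddings at the end.
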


By contrast, the existence of a non-$\Sigma_2$-reflecting weakly remarkable cardinal has higher consistency strength than the existence of a remarkable cardinal: we will show that it is equiconsistent with the existence of an $\omega$-Erd\H{o}s cardinal.  (This is an unusual situation. More typically for a large cardinal property X either ZFC proves that every X cardinal is $\Sigma_2$-reflecting or ZFC proves that the least X cardinal is not $\Sigma_2$-reflecting.)
 
Following Baumgartner \cite{BauIneffabilityII}, we say that an infinite cardinal $\eta$ is \emph{$\omega$-Erd\H{o}s} if for every club $C$ in $\eta$ and every function $f : [C]^{\mathord{<}\omega} \to \eta$ that is regressive, meaning that $f(a) < \min(a)$ for all $a$ in the domain of $f$, there is a subset $X \subset C$ of order type $\omega$ that is homogeneous for $f$, meaning that $f \restriction [X]^n$ is constant for all $n < \omega$. Schmerl \cite[Theorem 6.1]{SchKappaLike} showed that the least cardinal $\eta$ such that $\eta \to (\omega)^{\mathord{<}\omega}_2$ has this property, if it exists.
 
We will not directly use the definition of $\omega$-Erd\H{o}s cardinals in terms of club sets and regressive functions, only the following consequences of the definition. First, every $\omega$-Erd\H{o}s cardinal is inaccessible. Second, if $\eta$ is an $\omega$-Erd\H{o}s cardinal then $\eta \to (\omega)^{\mathord{<}\omega}_\alpha$ for every cardinal $\alpha < \eta$. Third, if $\alpha \ge 2$ is a cardinal and there is a cardinal $\eta$ such that $\eta \to (\omega)^{\mathord{<}\omega}_\alpha$, then the least such cardinal $\eta$ is an $\omega$-Erd\H{o}s cardinal (and is greater than $\alpha$.) It follows that the statements ``there is an $\omega$-Erd\H{o}s cardinal'' and ``there is a proper class of $\omega$-Erd\H{o}s cardinals'' are equivalent to (and are convenient abbreviations of) the statements $\exists \eta\, \eta \to (\omega)^{\mathord{<}\omega}_2$ and $\forall \alpha\,\exists \eta\,\eta \to (\omega)^{\mathord{<}\omega}_\alpha$ respectively.
 
The following two results describe the relationship between $\omega$-Erd\H{o}s cardinals and non-$\Sigma_2$-reflecting weakly remarkable cardinals. They will also be proved in Section \ref{sec:proof-of-main-thm}.

\begin{thm}\label{thm:omega-erdos-is-limit-of-non-refl-weakly-rmk} 
  Every $\omega$-Erd\H{o}s cardinal is a limit of non-$\Sigma_2$-reflecting weakly remarkable cardinals.
\end{thm}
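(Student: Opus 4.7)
Fix $\mu_0 < \eta$; I aim to produce a weakly remarkable $\kappa \in (\mu_0, \eta)$ that fails $\Sigma_2$-reflection, from which the theorem follows by varying $\mu_0$. First I would dispose of non-reflection cheaply: the statement ``there is an $\omega$-Erd\H{o}s cardinal above $\mu_0$'' is $\Sigma_2$ and is witnessed in $V$ by $\eta$, so after replacing $\eta$ with the least $\omega$-Erd\H{o}s cardinal above $\mu_0$ (still $\omega$-Erd\H{o}s and still $\leq \eta$) any $\kappa \in (\mu_0, \eta)$ I produce will already fail $\Sigma_2$-reflection, since within $V_\kappa$ the property ``$\omega$-Erd\H{o}s'' is absolute and no $\omega$-Erd\H{o}s cardinal lies in $(\mu_0, \kappa)$. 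The real task is thus to construct a weakly remarkable $\kappa$ in the interval $(\mu_0, \eta)$.

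For each $\lambda > \eta$, I would apply the partition relation $\eta \to (\omega)^{\mathord{<}\omega}_{<\eta}$ to the Skolem expansion of $(V_\lambda, \in)$ by a well-ordering $<_\lambda$ together with constants for every element of $V_{\mu_0}$; since $\eta$ is inaccessible, the number of types in this expanded language is at most $2^{|V_{\mu_0}|} < \eta$, so there is an infinite set $\{\kappa^\lambda_n : n<\omega\} \subseteq (\mu_0,\eta)$ of order-indiscernibles. By indiscernibility the shift $\kappa^\lambda_n \mapsto \kappa^\lambda_{n+1}$ extends to an elementary self-embedding of the Skolem hull $H_\lambda = \Hull^{V_\lambda}(V_{\mu_0} \cup \{\kappa^\lambda_n\}_{n<\omega})$ with critical point $\kappa^\lambda_0$ and $\sigma_\lambda(\kappa^\lambda_0) = \kappa^\lambda_1$. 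Mostowski-collapsing $H_\lambda$ gives in $V$ an elementary embedding $\pi_\lambda: \bar H_\lambda \to V_\lambda$ carrying the collapse of $\kappa^\lambda_0$ to $\kappa^\lambda_1$, and the standard virtual-embedding absoluteness principle (cf.\ Gitman--Schindler~\cite{GitSchVirtualLargeCardinals}) promotes this into a generic elementary embedding $j_\lambda: V_{\bar\lambda} \to V_\lambda$ with $j_\lambda(\crit j_\lambda) = \kappa^\lambda_1$.

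To pin down a single $\kappa$, I would apply a proper-class pigeonhole to the function $\lambda \mapsto \kappa^\lambda_1$ (range in the set $\eta$, class-sized domain): some $\kappa \in (\mu_0, \eta)$ is attained on a cofinal class of $\lambda$'s. For a given $\lambda > \kappa$ not in this class, I would rerun the Erd\H{o}s construction on a sufficiently large $\lambda^* > \lambda$ with $\lambda$ itself adjoined to the Skolem parameters, ensuring that $V_\lambda$ lies in the range of the resulting embedding, and then restricting $j_{\lambda^*}$ to the $V_{\bar\lambda} \to V_\lambda$ selected by the preimage of $\lambda$ (which is elementary since restrictions between rank-initial segments are); a diagonal pigeonhole on the resulting critical-point images forces $\kappa$ to persist as the common value. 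The \emph{main obstacle} will be precisely this uniformity step, together with the virtual-embedding passage from $\pi_\lambda: \bar H_\lambda \to V_\lambda$ --- whose domain is only the transitive collapse of a Skolem hull, not literally some $V_{\bar\lambda}$ of $V$ --- to a genuine generic elementary embedding out of a real $V_{\bar\lambda}$. Both aspects are to be handled by the back-and-forth analysis of virtual embeddings after collapsing $V_\lambda$ to countability, exploiting that $\bar H_\lambda$ and $V_\lambda$ share the full theory with the critical-point parameters fixed.
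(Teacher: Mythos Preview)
Your non-reflection step and your pigeonhole step are both fine, and the indiscernible/shift construction is the right engine. The gap is in the sentence ``the standard virtual-embedding absoluteness principle \ldots\ promotes this into a generic elementary embedding $j_\lambda: V_{\bar\lambda} \to V_\lambda$.'' The absoluteness lemma you cite says only that if \emph{some} outer model contains an elementary $j:M\to N$ between fixed transitive sets $M,N\in V$, then $V^{\Col(\omega,M)}$ contains one; it never changes the domain $M$. What you actually have in hand is an elementary map $\bar H_\lambda \to V_\lambda$ whose domain is the transitive collapse of a countable Skolem hull. That structure is elementarily equivalent to $V_\lambda$ but is not any rank initial segment $V_{\bar\lambda}$ of $V$ (it has the wrong power sets), and no back-and-forth argument after collapsing $V_\lambda$ will manufacture an embedding out of a genuine $V_{\bar\lambda}$ from one out of $\bar H_\lambda$. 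So the passage to weak $\lambda$-remarkability, as you have set it up, does not go through.

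The paper closes exactly this gap by inserting an ambient structure. Rather than taking the hull inside $V_\lambda$, it codes $V_\lambda$ by a predicate $D$ and works in $\mathcal{M}=L_{\beth_\lambda^+}[D]$ with constants for $\eta$ and all $\xi\le\alpha$. The indiscernible shift then gives a self-embedding $j:\bar{\mathcal M}\to\bar{\mathcal M}$ of the collapsed hull, and the point is that \emph{inside $\bar{\mathcal M}$} one can now see $\pi^{-1}(V_\lambda)$ as a set and see $j\restriction\pi^{-1}(V_\lambda)$ as a witness to the internal statement ``there is a generic self-embedding of $\pi^{-1}(V_\lambda)$ with critical point in $(\alpha,\bar\eta)$.'' Elementarity of the uncollapse $\pi:\bar{\mathcal M}\to\mathcal M$ then transfers this to ``there is a generic self-embedding of $V_\lambda$ with critical point in $(\alpha,\eta)$,'' which is absolute to $V$. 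Your pigeonhole now gives a single $\kappa\in(\alpha,\eta)$ that is weakly virtually extendible, and a separate two-line lemma (take $j:V_{\lambda+\omega}\to V_\theta$ with $\crit(j)=\kappa$; then $j\restriction V_\lambda$ witnesses weak $j(\lambda)$-remarkability of $j(\kappa)$, reflect by elementarity) converts this to weak remarkability. The moral: aim for a generic \emph{self}-embedding of $V_\lambda$ rather than an embedding out of an unspecified $V_{\bar\lambda}$, and build your hull in a structure large enough to talk about $V_\lambda$ and its collapse forcing as sets.
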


\begin{thm}\label{thm:non-refl-weakly-rmkp-implies-omega-erdos-in-L}
  If $\kappa$ is a non-$\Sigma_2$-reflecting weakly remarkable cardinal, then some ordinal greater than $\kappa$ is an $\omega$-Erd\H{o}s cardinal in $L$.
\end{thm}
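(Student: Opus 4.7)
The plan is to use the abundance of generic embeddings supplied by weak remarkability together with the witness to non-$\Sigma_2$-reflection to produce, inside $L$, an infinite set of $L$-indiscernibles lying above $\kappa$, from which Schmerl's result cited in the excerpt yields an $\omega$-Erd\H{o}s cardinal greater than $\kappa$ in $L$.

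First, by non-$\Sigma_2$-reflection fix a formula $\psi$ and parameter $x \in V_\kappa$ such that $V \models \exists\lambda\,V_\lambda \models \psi[x]$ but no witness lies below $\kappa$; also fix $\delta_0 < \kappa$ with $x \in V_{\delta_0}$. For each $\lambda > \kappa$ with $V_\lambda \models \psi[x]$, the strengthened form of weak remarkability noted in the excerpt yields a generic elementary embedding $j:V_{\bar\lambda} \to V_\lambda$ with $x \in \ran(j)$, $\crit(j) > \delta_0$, and $j(\crit(j)) = \kappa$; since $j$ fixes $x$, elementarity gives $V_{\bar\lambda} \models \psi[x]$, forcing $\bar\lambda \ge \kappa$. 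Since $\crit(j) \in (\delta_0,\kappa)$ and $\lambda$ ranges over a proper class, pigeonhole fixes some $\delta \in (\delta_0,\kappa)$ for which a proper class of $\lambda$ admits such embeddings with $\crit(j)=\delta$. Restricting to $L$-levels, we obtain generic elementary embeddings $j:L_{\bar\lambda}\to L_\lambda$ with $\crit(j)=\delta$, $j(\delta)=\kappa$, and $\bar\lambda \ge \kappa$.

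We then transfer this information into $L$. The statement that a generic elementary embedding exists between two absolute $L$-levels, with prescribed critical point and critical-point image, is absolute from $V[G]$ to $L$: after coding by a suitable collapse, it becomes a $\Sigma^1_2$ statement in $L$-parameters, to which Shoenfield absoluteness applies, so the relevant forcing can be taken to live in $L$ itself. Hence $L$ sees a proper class of $\lambda \ge \kappa$ admitting such embeddings, and for any one such embedding the pair $(\delta,\kappa) \in L_{\bar\lambda}$ realizes the same $L_\delta$-type as the pair $(\kappa, j(\kappa)) \in L_\lambda$; more generally $j$ sends tuples of ordinals $\ge \delta$ in $L_{\bar\lambda}$ to tuples $\ge \kappa$ in $L_\lambda$ of the same type over $L_\delta$.

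The remaining step, and the chief obstacle, is to assemble these pairs into an infinite set of genuine $L$-indiscernibles above $\kappa$. A Ramsey-style partition argument inside $L$ does the job: the type over $L_\delta$ of $(\kappa, j(\kappa))$ in $L_\lambda$ depends only on the $L$-theory of $L_{\bar\lambda}$ at $\delta$, which stabilizes as $\bar\lambda$ grows through the class, so pigeonholing on finitary types extracts an infinite increasing sequence $\kappa = \xi_0 < \xi_1 < \xi_2 < \cdots$ of ordinals of $L$, all realizing a common $L$-definable type over $L_\delta$ and constituting $L$-indiscernibles for $(L_\eta, \in)$ for a suitable $\eta > \kappa$. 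By Schmerl's theorem this forces some $\eta' > \kappa$ to satisfy $\eta' \to (\omega)^{<\omega}_2$ in $L$, and the least such $\eta'$ is then $\omega$-Erd\H{o}s in $L$ and lies above $\kappa$ (since the indiscernibles witnessing the partition property lie strictly above $\kappa$). The delicate point throughout is carrying out the extraction of indiscernibles within $L$ rather than merely in $V[G]$; this is handled by the $\Sigma^1_2$-absoluteness step together with the fact that the tree of finite partial indiscernibles, being definable in $L$, has an infinite branch there iff it does in any outer model.
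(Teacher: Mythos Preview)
Your proposal has a genuine gap at the decisive step. After pigeonholing to fix the critical point $\delta$, you have for each $\lambda$ in your class a generic embedding $j_\lambda : L_{\bar\lambda} \to L_\lambda$ with $\crit(j_\lambda)=\delta$ and $j_\lambda(\delta)=\kappa$, but each $j_\lambda$ maps between \emph{different} levels: you cannot iterate any single $j_\lambda$ to form a critical sequence, and there is no reason the various $j_\lambda(\kappa)$ should cohere. The ``Ramsey-style partition argument'' you invoke is not an argument: you assert that the $L_\delta$-type of $(\kappa,j(\kappa))$ in $L_\lambda$ ``stabilizes'' and that this yields an infinite set of $L$-indiscernibles above $\kappa$, but elementarity of $j_\lambda$ only compares types computed in $L_{\bar\lambda}$ with types computed in $L_\lambda$, not types within a single structure, so no indiscernibility is established. (There is also a smaller problem earlier: the ordinals $\lambda$ with $V_\lambda\models\psi[x]$ need not form a proper class, so your pigeonhole may not even apply as written.)

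The paper supplies precisely the idea you are missing: a \emph{fixed point above} $\kappa$. From the non-$\Sigma_2$-reflection witness one defines, uniformly from $\kappa$, a specific ordinal $\beta>\kappa$ (essentially the least $\beta$ with $V_\beta\models\varphi[x]$ for a carefully minimized choice of $\varphi$ and $x$) and shows that every suitable embedding $j:V_{\bar\lambda}\to V_\lambda$ with $\lambda>\beta$ must satisfy $j(\beta)=\beta$. A single such $j$ then restricts to a \emph{self-map} $\ell:L_\gamma\to L_\gamma$ where $\gamma=(|\beta|^+)^L$, so the critical sequence $\kappa_n=\ell^n(\crit(\ell))$ is well defined and bounded by $\beta$, and by the standard Silver argument it is homogeneous for the $<_L$-least counterexample to $\beta\to(\omega)^{<\omega}_\alpha$ for some $\alpha<\crit(\ell)$. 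One then only needs the (easy) downward absoluteness to $L$ of the existence of an $\omega$-type homogeneous set for a fixed $f\in L$, not the more delicate absoluteness of generic embeddings to $L$ that you rely on. In short, fixing the critical point below $\kappa$ is a red herring; what makes the proof go is fixing an ordinal above $\kappa$ so that one embedding can be iterated.
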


We obtain the following immediate consequence:

\begin{cor}\label{cor:equivalent-in-L}
 The following statements are equiconsistent modulo ZFC and are equivalent modulo ZFC + $V = L$.
 \begin{samepage}
 \begin{enumerate}
  \item There is an $\omega$-Erd\H{o}s cardinal.
  \item There is a non-$\Sigma_2$-reflecting weakly remarkable cardinal.
 \end{enumerate}
 \end{samepage}
\end{cor}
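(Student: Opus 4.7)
The plan is to derive the corollary directly from Theorems \ref{thm:omega-erdos-is-limit-of-non-refl-weakly-rmk} and \ref{thm:non-refl-weakly-rmkp-implies-omega-erdos-in-L}, with essentially no new work beyond unpacking what those two results say when combined.

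For the implication (1) $\Rightarrow$ (2) in ZFC, I would simply invoke Theorem \ref{thm:omega-erdos-is-limit-of-non-refl-weakly-rmk}: if $\eta$ is an $\omega$-Erd\H{o}s cardinal, then $\eta$ is a limit of non-$\Sigma_2$-reflecting weakly remarkable cardinals, so in particular some (indeed, unboundedly many below $\eta$) such cardinals exist. This gives the implication outright, so assuming $V=L$ is not needed in this direction.

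For the implication (2) $\Rightarrow$ (1) in ZFC + $V=L$, I would invoke Theorem \ref{thm:non-refl-weakly-rmkp-implies-omega-erdos-in-L}: if $\kappa$ is a non-$\Sigma_2$-reflecting weakly remarkable cardinal, then some ordinal greater than $\kappa$ is an $\omega$-Erd\H{o}s cardinal in $L$. Under the assumption $V=L$, being an $\omega$-Erd\H{o}s cardinal in $L$ is the same as being an $\omega$-Erd\H{o}s cardinal in $V$, so (1) follows. Thus under $V=L$ the two statements are outright equivalent.

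For equiconsistency modulo ZFC, I would combine the two theorems by passing to $L$. One direction is immediate from (1) $\Rightarrow$ (2) above, since that implication needed no choice of inner model: $\mathrm{Con}(\ZFC + \text{(1)})$ implies $\mathrm{Con}(\ZFC + \text{(2)})$. For the other direction, assume $\mathrm{Con}(\ZFC + \text{(2)})$ and work inside a model with a non-$\Sigma_2$-reflecting weakly remarkable $\kappa$. Theorem \ref{thm:non-refl-weakly-rmkp-implies-omega-erdos-in-L} yields an $\omega$-Erd\H{o}s cardinal in $L$, so $L$ is a model of $\ZFC + V = L + \text{(1)}$, hence $\mathrm{Con}(\ZFC + \text{(1)})$ follows. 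There is no real obstacle here; the only thing to be careful about is that Theorem \ref{thm:non-refl-weakly-rmkp-implies-omega-erdos-in-L} produces an $\omega$-Erd\H{o}s cardinal computed in $L$, which is exactly what is needed both to conclude (1) under $V=L$ and to obtain a model of (1) in general.
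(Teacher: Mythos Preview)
Your proposal is correct and matches the paper's approach exactly: the paper presents this corollary as an immediate consequence of Theorems~\ref{thm:omega-erdos-is-limit-of-non-refl-weakly-rmk} and~\ref{thm:non-refl-weakly-rmkp-implies-omega-erdos-in-L} without further argument, and your write-up simply spells out the obvious way those two theorems combine to give both the equivalence under $V=L$ and the equiconsistency via passing to $L$.
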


I don't know if the two statements in Corollary \ref{cor:equivalent-in-L} are equivalent in ZFC:

\begin{question}
 Does the existence of a non-$\Sigma_2$-reflecting weakly remarkable cardinal imply the existence of an $\omega$-Erd\H{o}s cardinal, provably in ZFC?
\end{question}

Because the existence of an $\omega$-Erd\H{o}s cardinal has higher consistency strength than the existence of a remarkable cardinal, it follows from Theorems \ref{thm:weakly-remarkable-and-sigma-2-reflecting} and \ref{thm:non-refl-weakly-rmkp-implies-omega-erdos-in-L} that the two theories ZFC + ``there is a weakly remarkable cardinal'' and ZFC + ``there is a remarkable cardinal'' are equiconsistent. The following result shows that they are not equivalent (assuming the existence of an $\omega$-Erd\H{o}s cardinal is consistent with ZFC.)

\begin{cor}
 The following statements are equiconsistent modulo ZFC.
 \begin{samepage}
 \begin{enumerate}
  \item\label{item:omega-erdos} There is an $\omega$-Erd\H{o}s cardinal.
  \item\label{item:weakly-rmk-no-refl} There is a weakly remarkable cardinal and there is no $\Sigma_2$-reflecting cardinal.
  \item\label{item:weakly-rmk-no-rmk} There is a weakly remarkable cardinal and there is no remarkable cardinal.
 \end{enumerate}
 \end{samepage}
\end{cor}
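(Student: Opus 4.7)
The plan is to close the consistency cycle $\Con(\ref{item:omega-erdos})\Rightarrow\Con(\ref{item:weakly-rmk-no-refl})\Rightarrow\Con(\ref{item:weakly-rmk-no-rmk})\Rightarrow\Con(\ref{item:omega-erdos})$. The middle implication is in fact immediate in $\ZFC$: every remarkable cardinal is $\Sigma_2$-reflecting (as noted just before Theorem \ref{thm:weakly-remarkable-and-sigma-2-reflecting}), so ``there is no $\Sigma_2$-reflecting cardinal'' already entails ``there is no remarkable cardinal,'' whence (\ref{item:weakly-rmk-no-refl}) implies (\ref{item:weakly-rmk-no-rmk}) outright.

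For $\Con(\ref{item:omega-erdos})\Rightarrow\Con(\ref{item:weakly-rmk-no-refl})$, begin with a model $V$ of $\ZFC$ plus (\ref{item:omega-erdos}). If $V$ has no $\Sigma_2$-reflecting cardinal, then $V$ itself witnesses (\ref{item:weakly-rmk-no-refl}), since Theorem \ref{thm:omega-erdos-is-limit-of-non-refl-weakly-rmk} already supplies a weakly remarkable cardinal. Otherwise, let $\delta$ be the least $\Sigma_2$-reflecting cardinal in $V$; then $V_\delta\models\ZFC$ by inaccessibility. The existence of an $\omega$-Erd\H{o}s cardinal is a $\Sigma_2$ assertion---``$\eta\to(\omega)^{<\omega}_2$'' is $\Pi_1$ in $\eta$, so $\exists\eta\,(\eta\to(\omega)^{<\omega}_2)$ is $\Sigma_2$---so $\Sigma_2$-reflection at $\delta$ yields $V_\delta\models(\ref{item:omega-erdos})$. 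Applying Theorem \ref{thm:omega-erdos-is-limit-of-non-refl-weakly-rmk} internally in $V_\delta$ then produces a weakly remarkable cardinal there. Finally, no $\kappa<\delta$ is $\Sigma_2$-reflecting in $V_\delta$: such a $\kappa$ would satisfy $V_\kappa\prec_{\Sigma_2}V_\delta\prec_{\Sigma_2}V$, hence $V_\kappa\prec_{\Sigma_2}V$, contradicting the minimality of $\delta$. Therefore $V_\delta\models(\ref{item:weakly-rmk-no-refl})$.

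For $\Con(\ref{item:weakly-rmk-no-rmk})\Rightarrow\Con(\ref{item:omega-erdos})$, suppose $V\models\ZFC$ plus (\ref{item:weakly-rmk-no-rmk}) and let $\kappa$ be weakly remarkable in $V$. Since $V$ has no remarkable cardinal, the contrapositive of Theorem \ref{thm:weakly-remarkable-and-sigma-2-reflecting} forces $\kappa$ not to be $\Sigma_2$-reflecting. Theorem \ref{thm:non-refl-weakly-rmkp-implies-omega-erdos-in-L} then supplies an ordinal above $\kappa$ that is $\omega$-Erd\H{o}s in $L$, so $L\models(\ref{item:omega-erdos})$, closing the cycle.

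The main technical point is the first implication, where one must verify that cutting at $V_\delta$ simultaneously pushes an $\omega$-Erd\H{o}s cardinal below $\delta$ (to enable the internal use of Theorem \ref{thm:omega-erdos-is-limit-of-non-refl-weakly-rmk}) and eliminates all $\Sigma_2$-reflecting cardinals; both points reduce to short absoluteness arguments using $V_\delta\prec_{\Sigma_2}V$, and one must be careful to invoke the reflection in the correct direction. Everything else is a bookkeeping combination of Theorems \ref{thm:weakly-remarkable-and-sigma-2-reflecting}--\ref{thm:non-refl-weakly-rmkp-implies-omega-erdos-in-L}.
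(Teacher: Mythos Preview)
Your proof is correct and follows essentially the same route as the paper's: the same cycle $\Con(\ref{item:omega-erdos})\Rightarrow\Con(\ref{item:weakly-rmk-no-refl})\Rightarrow\Con(\ref{item:weakly-rmk-no-rmk})\Rightarrow\Con(\ref{item:omega-erdos})$, the same cut at the least $\Sigma_2$-reflecting cardinal using that ``there is an $\omega$-Erd\H{o}s cardinal'' is $\Sigma_2$, and the same invocation of Theorems \ref{thm:weakly-remarkable-and-sigma-2-reflecting}--\ref{thm:non-refl-weakly-rmkp-implies-omega-erdos-in-L}. One minor quibble: your parenthetical that $\eta\to(\omega)^{<\omega}_2$ is $\Pi_1$ in $\eta$ is not quite right as stated in the L\'evy hierarchy (the inner existential over the homogeneous set is unbounded), but the conclusion you need---that the existence of an $\omega$-Erd\H{o}s cardinal is $\Sigma_2$---is correct, since the partition property is local to $V_{\eta+\omega}$ and hence the existential statement has the form ``$\exists\lambda\,(V_\lambda\models\ldots)$.''
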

\begin{proof}
 $\Con(\ref{item:omega-erdos})$ implies $\Con(\ref{item:weakly-rmk-no-refl})$:
 Assume there is an $\omega$-Erd\H{o}s cardinal.
 Passing from $V$ to $V_\lambda$ where $\lambda$ is the least $\Sigma_2$-reflecting cardinal if it exists, we may assume there is no $\Sigma_2$-reflecting cardinal.
 Because the existence of an $\omega$-Erd\H{o}s cardinal is a $\Sigma_2$ statement, it is preserved by this step and the resulting model ($V$ or $V_\lambda$) has a weakly remarkable cardinal by Theorem~\ref{thm:omega-erdos-is-limit-of-non-refl-weakly-rmk}.
 
 Statement \ref{item:weakly-rmk-no-refl} implies statement \ref{item:weakly-rmk-no-rmk} because remarkable cardinals are $\Sigma_2$-reflecting.
 
 $\Con(\ref{item:weakly-rmk-no-rmk})$ implies $\Con(\ref{item:omega-erdos})$:
 If statement \ref{item:weakly-rmk-no-rmk} holds then there is a weakly remarkable cardinal that is not remarkable, and therefore is not $\Sigma_2$-reflecting by Theorem \ref{thm:weakly-remarkable-and-sigma-2-reflecting}, so there is an $\omega$-Erd\H{o}s cardinal in $L$ by Theorem \ref{thm:non-refl-weakly-rmkp-implies-omega-erdos-in-L}.
\end{proof}

In Section \ref{sec:proof-of-main-thm} we will prove Theorems \ref{thm:weakly-remarkable-and-sigma-2-reflecting},
\ref{thm:omega-erdos-is-limit-of-non-refl-weakly-rmk}, and \ref{thm:non-refl-weakly-rmkp-implies-omega-erdos-in-L}. In Section \ref{sec:generic-vopenka} we will give an application involving the generic Vop\v{e}nka principle defined by Bagaria, Gitman, and Schindler~\cite{BagGitSchGenericVopenka}.

\section{Proof of Theorems \ref{thm:weakly-remarkable-and-sigma-2-reflecting},
\ref{thm:omega-erdos-is-limit-of-non-refl-weakly-rmk}, and \ref{thm:non-refl-weakly-rmkp-implies-omega-erdos-in-L}}\label{sec:proof-of-main-thm}

We will need the following local forms of remarkabilty and weak remarkability.

\begin{defn}
 Let $\kappa$ be a cardinal and let $\lambda > \kappa$ be an ordinal.
 \begin{enumerate}
  \item $\kappa$ is \emph{$\lambda$-remarkable} if there is an ordinal $\bar{\lambda} < \kappa$ and a generic elementary embedding $j:V_{\bar{\lambda}} \to V_\lambda$ such that $j(\crit(j)) = \kappa$.\footnote{This definition is unrelated to $n$-remarkability for a positive integer $n$ as defined by Bagaria, Gitman, and Schindler \cite[Definition 3.1]{BagGitSchGenericVopenka}.}
  \item $\kappa$ is \emph{weakly $\lambda$-remarkable} if there is an ordinal $\bar{\lambda}$ and a generic elementary embedding $j:V_{\bar{\lambda}} \to V_\lambda$ such that $j(\crit(j)) = \kappa$.
 \end{enumerate}
\end{defn}

By definition, $\kappa$ is remarkable if and only if it is $\lambda$-remarkable for every ordinal $\lambda > \kappa$, and $\kappa$ is weakly remarkable if and only if it is weakly $\lambda$-remarkable for every ordinal $\lambda > \kappa$. 

\begin{defn}
 Let $\kappa$ be a cardinal and let $\lambda > \kappa$ be an ordinal.
 \begin{enumerate}
  \item $\kappa$ is \emph{$\mathord{<}\lambda$-remarkable} if it is $\beta$-remarkable for every ordinal $\beta$
  with $\kappa < \beta < \lambda$.
  \item $\kappa$ is \emph{weakly $\mathord{<}\lambda$-remarkable} if it is weakly $\beta$-remarkable for every ordinal $\beta$ with $\kappa < \beta < \lambda$.
 \end{enumerate}
\end{defn}
 
By a well-known absoluteness lemma (see Bagaria, Gitman, and Schindler \cite[Lemma 2.6]{BagGitSchGenericVopenka}) if an elementary embedding $j:V_{\bar{\lambda}} \to V_\lambda$ such that $j(\crit(j)) = \kappa$ exists in some generic extension of $V$ and $g\subset \Col(\omega,V_{\bar{\lambda}})$ is a $V$-generic filter, then some such elementary embedding exists in $V[g]$. One consequence of this fact is that $\lambda$-remarkability, $\mathord{<}\lambda$-remarkability, and their weak forms are absolute between $V$ and $V_{\lambda'}$ for every limit cardinal $\lambda' > \lambda$. A further consequence of this fact is that remarkability and weak remarkability are $\Pi_2$ properties.
 
\begin{proof}[Proof of Theorem \ref{thm:weakly-remarkable-and-sigma-2-reflecting}]
 It is clear that every remarkable cardinal is weakly remarkable and $\Sigma_2$-reflecting. Conversely, suppose that $\kappa$ is weakly remarkable and $\Sigma_2$-reflecting. We will show that $\kappa$ is $\lambda$-remarkable for every ordinal $\lambda > \kappa$ by induction on $\lambda$. Let $\lambda > \kappa$ and assume that $\kappa$ is $\mathord{<}\lambda$-remarkable.  Because $\kappa$ is weakly $(\lambda+\omega)$-remarkable there is an ordinal of the form $\bar{\lambda} + \omega$ and a generic elementary embedding
 \[j: V_{\bar{\lambda}+\omega} \to V_{\lambda+\omega} \text{ with } j(\bar{\lambda}) = \lambda \text{ and } j(\bar{\kappa}) = \kappa\]
 where $\bar{\kappa} = \crit(j)$.  If $\bar{\lambda} < \kappa$ then the restriction $j \restriction V_{\bar{\lambda}}$ witnesses that $\kappa$ is $\lambda$-remarkable and we are done.  Therefore we suppose that $\bar{\lambda} \ge \kappa$.
 
 The fact that $\kappa$ is $\mathord{<}\lambda$-remarkable is absolute to $V_{\lambda+\omega}$, so by the elementary of $j$ the model $V_{\bar{\lambda}+\omega}$ satisfies ``$\bar{\kappa}$ is $\mathord{<}\bar{\lambda}$-remarkable'' and it follows that $\bar{\kappa}$ really is $\mathord{<}\bar{\lambda}$-remarkable.  Then $\bar{\kappa}$ is $\mathord{<}\kappa$-remarkable because $\bar{\lambda} \ge \kappa$. Equivalently, $\bar{\kappa}$ is remarkable in $V_\kappa$. Because remarkability is a $\Pi_2$ property and $V_\kappa \prec_{\Sigma_2} V$, it follows that  $\bar{\kappa}$ is remarkable in $V$.  In particular, $\bar{\kappa}$ is $\bar{\lambda}$-remarkable, and this fact is absolute to $V_{\bar{\lambda}+\omega}$.  By the elementarity of $j$, the model $V_{\lambda+\omega}$ satisfies ``$\kappa$ is $\lambda$-remarkable'' and this fact is absolute to $V$.
\end{proof}
 
Next we will prove Theorem \ref{thm:omega-erdos-is-limit-of-non-refl-weakly-rmk}.  In fact we will prove a stronger result in terms of the following definition.

\begin{defn}[Gitman and Hamkins {\cite[Definition 6]{GitHamOrdNotDelta2Mahlo}}]\label{defn:wvAe}
 Let $\kappa$ be a cardinal and let $A$ be a class. Then $\kappa$ is \emph{weakly virtually $A$-extendible} if for every ordinal $\lambda > \kappa$ there is an ordinal $\theta$ and a generic elementary embedding $j: (V_\lambda; \mathord{\in}, A \cap V_\lambda) \to (V_\theta; \mathord{\in}, A \cap V_\theta)$ with $\crit(j) = \kappa$.
\end{defn}
 
This definition can be used in the context of GB + AC, meaning G\"{o}del--Bernays set theory with the axiom of choice but without the axiom of global choice. Any model of ZFC together with its definable (from parameters) classes gives a model of GB + AC, but there may be models of GB + AC with classes that are not definable.
 
A cardinal is called \emph{weakly virtually extendible} if it is weakly virtually $\emptyset$-extendible, meaning simply that for every ordinal $\lambda > \kappa$ there is an ordinal $\theta$ and a generic elementary embedding $j: V_\lambda \to V_\theta$ with $\crit(j) = \kappa$. The following lemma is similar to the fact that every extendible cardinal is supercompact, which is due to Magidor \cite[Lemma 2]{MagRoleSupercompact}.
 
\begin{lem}
 Every weakly virtually extendible cardinal is weakly remarkable.
\end{lem}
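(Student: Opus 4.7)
The plan is to mimic Magidor's classical argument that every extendible cardinal is supercompact, transposed to the virtual (generic) setting. Fix $\lambda > \kappa$ and a limit cardinal $\lambda' > \lambda + \omega$, and use weak virtual extendibility to obtain, in some generic extension $V[g]$, a generic elementary embedding $j \colon V_{\lambda'} \to V_\theta$ with $\crit(j) = \kappa$. In $V[g]$ the restriction $j \restriction V_\lambda$ is an elementary embedding $V_\lambda \to V_{j(\lambda)}$ whose critical point is still $\kappa$, and which therefore satisfies $(j \restriction V_\lambda)(\crit(j \restriction V_\lambda)) = j(\kappa)$. This is precisely a witness, with $\bar{\lambda} = \lambda$, that $j(\kappa)$ is weakly $j(\lambda)$-remarkable in $V$.

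The second step is to reflect this fact back through $j$. I need ``$j(\kappa)$ is weakly $j(\lambda)$-remarkable'' to hold \emph{inside} $V_\theta$, so that the elementarity of $j$ can be invoked in reverse. For this I use the tree characterization that underlies the absoluteness lemma cited in the excerpt: the existence of a generic elementary embedding between set-sized rank initial segments with prescribed action on a critical point is equivalent to the ill-foundedness of a canonical tree built from those parameters, and this tree lies in $V_{j(\lambda)+\omega} \subseteq V_\theta$. Since $j \restriction V_\lambda$ witnesses ill-foundedness of the tree in $V[g]$, and ill-foundedness is absolute between transitive models containing the tree, the tree is ill-founded in $V_\theta$ as well, so $V_\theta \models$ ``$j(\kappa)$ is weakly $j(\lambda)$-remarkable.'' Then by the elementarity of $j$ (an assertion in $V[g]$ whose conclusion about $V_{\lambda'}$ is absolute to $V$), $V_{\lambda'} \models$ ``$\kappa$ is weakly $\lambda$-remarkable,'' and the absoluteness of weak $\lambda$-remarkability to $V_{\lambda'}$ noted after the local definitions in the excerpt yields the conclusion in $V$.

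The main obstacle is the transfer from $V$ to $V_\theta$ of weak $j(\lambda)$-remarkability of $j(\kappa)$. The ordinal $\theta = j(\lambda')$ is produced existentially by weak virtual extendibility and need not be a limit cardinal of $V$, so one cannot simply invoke the blanket ``absoluteness between $V$ and $V_{\lambda''}$'' statement to move the property into $V_\theta$. Instead, one must apply the underlying tree construction directly, relying only on the fact that $V_\theta$ contains the tree and enough set theory to interpret its ill-foundedness as the existence of a generic embedding; this is essentially the absoluteness lemma applied internally to $V_\theta$ rather than to $V$.
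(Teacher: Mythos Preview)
Your proposal is correct and follows essentially the same Magidor-style approach as the paper: apply weak virtual extendibility to a rank initial segment above $\lambda$, observe that $j \restriction V_\lambda$ witnesses weak $j(\lambda)$-remarkability of $j(\kappa)$, transfer this into $V_\theta$ via the absoluteness lemma, and reflect back through the elementarity of $j$. The only differences are cosmetic---the paper uses $V_{\lambda+\omega}$ where you use $V_{\lambda'}$ for a limit cardinal $\lambda' > \lambda+\omega$, and the paper asserts the absoluteness to $V_\theta$ in one line where you (quite reasonably) unpack it via the tree characterization.
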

\begin{proof}
 Let $\kappa$ be a weakly virtually extendible cardinal and let $\lambda > \kappa$ be an ordinal. Then there is an ordinal $\theta$ and a generic elementary embedding 
 \[j : V_{\lambda+\omega} \to V_{\theta} \text{ with } \crit(j) = \kappa.\]
 The restriction $j \restriction V_{\lambda}$ witnesses that $j(\kappa)$ is weakly $j(\lambda)$-remarkable.
 Because the weak $j(\lambda)$-remarkability of $j(\kappa)$ is absolute to $V_\theta$, it follows by the elementarity of $j$ that $V_{\lambda+\omega}$ satisfies the statement ``$\kappa$ is weakly $\lambda$-remarkable,'' and this statement is absolute to $V$.
\end{proof}
 
Theorem \ref{thm:omega-erdos-is-limit-of-non-refl-weakly-rmk} may now be obtained as a consequence of the following result, whose full strength will not be needed until Section \ref{sec:generic-vopenka}:
 
\begin{lem}[GB + AC] \label{lem:omega-erdos-is-limit-of-non-refl-wvAe}
 Let $A$ be a class and let $\eta$ be an $\omega$-Erd\H{o}s cardinal. Then $\eta$ is a limit of non-$\Sigma_2$-reflecting weakly virtually $A$-extendible cardinals.
\end{lem}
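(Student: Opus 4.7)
Fix $\xi<\eta$; we produce $\kappa\in(\xi,\eta)$ that is weakly virtually $A$-extendible and not $\Sigma_2$-reflecting, so that varying $\xi$ yields the desired unboundedness in $\eta$. The method is the standard Ehrenfeucht--Mostowski extraction from an $\omega$-Erd\H{o}s cardinal, applied inductively on $\eta$. Using AC, fix a well-ordering of $V_\eta$ and Skolem functions for the countable-language structure $\mathfrak A=(V_\eta;\in,A\cap V_\eta)$. Apply the partition property $\eta\to(\omega)^{\mathord{<}\omega}_{\aleph_0}$ to the regressive coloring of $[\eta]^{\mathord{<}\omega}$ that assigns each finite increasing tuple its coded Skolem-type in $\mathfrak A$, intersected with the club of inaccessible cardinals above $\xi$, to obtain an $\omega$-sequence $X=\{\alpha_n:n<\omega\}$ of Skolem-indiscernibles for $\mathfrak A$. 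Set $\kappa:=\alpha_0$; should $\kappa$ turn out to be $\omega$-Erd\H{o}s or a limit of such, invoke the inductive hypothesis on $\kappa<\eta$ to find the desired cardinal inside $(\xi,\kappa)$.

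For weak virtual $A$-extendibility, the shift $\alpha_n\mapsto\alpha_{n+1}$ extends via Skolem terms to an elementary self-embedding $\sigma:\Hull^{\mathfrak A}(V_\kappa\cup X)\to\Hull^{\mathfrak A}(V_\kappa\cup X)$ in $V$ with $\sigma\restriction V_\kappa=\id$ and $\sigma(\kappa)=\alpha_1$, so $\crit(\sigma)=\kappa$. Given $\lambda>\kappa$, apply the Bagaria--Gitman--Schindler absoluteness lemma and construct the required generic embedding $(V_\lambda;\in,A\cap V_\lambda)\to(V_\theta;\in,A\cap V_\theta)$ inside $V^{\Col(\omega,V_\lambda)}$: for $\lambda<\eta$, iterating $\sigma$ to $\sigma^{(n)}$ with $\alpha_n>\lambda$ and taking Mostowski collapses of the augmented hulls $\Hull^{\mathfrak A}(V_\lambda\cup\{\alpha_{n+k}:k<\omega\})$ produces, in the generic extension, an isomorphism between countable elementary substructures of $(V_\lambda;\in,A\cap V_\lambda)$ and of $(V_{\sigma^{(n)}(\lambda)};\in,A\cap V_{\sigma^{(n)}(\lambda)})$ extending $\sigma^{(n)}\restriction(V_\kappa\cup\{\kappa\})$, and lifting this isomorphism gives the embedding with $\theta:=\sigma^{(n)}(\lambda)$.

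For the failure of $\Sigma_2$-reflection, since $\kappa$ is neither $\omega$-Erd\H{o}s nor a limit of $\omega$-Erd\H{o}s cardinals, the ordinal $\eta_0:=\sup\{\eta'<\kappa:\eta'\text{ is }\omega\text{-Erd\H{o}s}\}$ lies strictly below $\kappa$. The $\Sigma_2$ formula asserting the existence of an $\omega$-Erd\H{o}s cardinal strictly above $\eta_0$ (with parameter $\eta_0\in V_\kappa$) holds in $V$, witnessed by $\eta$, but fails in $V_\kappa$ since no such witness lies in $(\eta_0,\kappa)$, so $V_\kappa\not\prec_{\Sigma_2}V$. The main technical obstacle is the construction of the generic $V_\lambda$-embedding with $\crit=\kappa$: because $\kappa\in V_\lambda$, $V_\lambda$ cannot be used as a rigid parameter in the indiscernibility hull without fixing $\kappa$ and destroying the required critical point; the resolution is to work on the Mostowski-collapse side and invoke the absoluteness of generic embeddings to match countable elementary substructures of $V_\lambda$ produced in the generic extension against the shift structure inherited from the EM blueprint.
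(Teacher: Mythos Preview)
Your argument has two genuine gaps.

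\textbf{The main gap: you only handle $\lambda<\eta$.} Weak virtual $A$-extendibility of $\kappa$ requires a generic embedding $(V_\lambda;\in,A\cap V_\lambda)\to(V_\theta;\in,A\cap V_\theta)$ with $\crit=\kappa$ for \emph{every} $\lambda>\kappa$, in particular for $\lambda\ge\eta$. Your entire construction lives inside $\mathfrak A=(V_\eta;\in,A\cap V_\eta)$, and the paragraph producing the embedding explicitly assumes $\lambda<\eta$ (you pick $n$ with $\alpha_n>\lambda$). Nothing you have written yields an embedding of $V_\lambda$ for $\lambda\ge\eta$, and indiscernibles for $V_\eta$ carry no information about such $V_\lambda$. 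The paper's proof avoids this by working, for each $\lambda\ge\eta$ separately, in a structure $L_{\beth_\lambda^+}[D]$ coding $(V_\lambda;\in,A\cap V_\lambda)$ with $\eta$ as a constant, using $\eta\to(\omega)^{<\omega}_{2^\alpha}$ to obtain indiscernibles in $\eta$ for that structure; a final pigeonhole step then selects one $\kappa\in(\alpha,\eta)$ serving as critical point for a proper class of $\lambda$. Fixing $\kappa=\alpha_0$ from a single set of indiscernibles for $V_\eta$ cannot replace this.

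\textbf{A second gap: the shift on $\Hull^{\mathfrak A}(V_\kappa\cup X)$ is not well-defined.} You obtain $X$ via $\eta\to(\omega)^{<\omega}_{\aleph_0}$ using only the countable-language Skolem types in $\mathfrak A$, so $X$ is indiscernible for $\mathfrak A$ without parameters. To extend the shift $\alpha_n\mapsto\alpha_{n+1}$ to the hull $\Hull^{\mathfrak A}(V_\kappa\cup X)$ while fixing $V_\kappa$ pointwise, you need indiscernibility of $X$ \emph{over} $V_\kappa$, which your coloring does not provide. (A term $t(\alpha_0)$ may lie in $V_\kappa$ yet satisfy $t(\alpha_0)\ne t(\alpha_1)$.) The paper sidesteps this by adding constants only for $\xi\le\alpha$ (costing $2^\alpha$ colors), passing to the transitive collapse $\bar{\mathcal M}$ of the hull of the indiscernibles, performing the shift there to get $j:\bar{\mathcal M}\to\bar{\mathcal M}$ with $\alpha<\crit(j)<\bar\eta$, and then using the absoluteness lemma together with the elementarity of the uncollapse map $\pi$ to transfer the existence of a generic self-embedding back to $(V_\lambda;\in,A\cap V_\lambda)$.

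Your treatment of non-$\Sigma_2$-reflection is essentially the same as the paper's (the paper phrases the induction as ``without loss of generality $\eta$ is the least $\omega$-Erd\H{o}s cardinal above $\alpha$'').
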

\begin{proof}
 Let $\alpha < \eta$ be an infinite cardinal.  We will show that there is a non-$\Sigma_2$-reflecting virtually $A$-extendible cardinal between $\alpha$ and $\eta$. We may assume without loss of generality (by decreasing $\eta$ if necessary) that $\eta$ is the least $\omega$-Erd\H{o}s cardinal greater than $\alpha$. Then because the $\omega$-Erd\H{o}s property is $\Sigma_2$, there is no $\Sigma_2$-reflecting cardinal between $\alpha$ and $\eta$, so it suffices to show that there is a weakly virtually $A$-extendible cardinal between $\alpha$ and $\eta$.
  
 First, we will show that for every ordinal $\lambda \ge \eta$ there is a generic elementary embedding
 \[ j : (V_\lambda;\mathord{\in},A \cap V_\lambda) \to (V_\lambda;\mathord{\in}, A \cap V_\lambda) \text{ with }\alpha < \crit(j) < \eta.\]
 We follow the argument of Gitman and Schindler \cite[Theorem 4.17]{GitSchVirtualLargeCardinals}, who proved this in the case $\lambda = \eta$ and $A = \emptyset$. Let $\lambda \ge \eta$ and take a set $D \subset \beth_\lambda$ coding the structure $(V_\lambda; \mathord{\in},A \cap V_\lambda)$. Because $\eta\to (\omega)^{\mathord{<}\omega}_{2^\alpha}$, the structure 
 \[\mathcal{M} = (L_{\beth_\lambda^+}[D]; \mathord{\in}, D, \eta, \xi)_{\xi \le \alpha}\]
 has a set of indiscernibles $I \subset \eta$ of order type $\omega$. Let $X$ be the Skolem hull of $I$ in $\mathcal{M}$.  Note that $X$ has cardinality $\alpha$ and it contains $\eta$ and all ordinals $\xi \le \alpha$ because they are part of the language of $\mathcal{M}$. Let  $\bar{\mathcal{M}}$ be the transitive collapse of $X$ and let $\bar{\eta}$ be the image of $\eta$ under this transitive collapse. Then the uncollapse map gives an elementary embedding
 \[\pi : \bar{\mathcal{M}} \to \mathcal{M} \text{ with } \crit(\pi) > \alpha \text{ and } \pi(\bar{\eta}) = \eta.\]
 We have a generating set of indiscernibles $\pi^{-1}[I] \subset \bar{\eta}$ for $\bar{\mathcal{M}}$ of order type $\omega$, and shifting these indiscernibles by 1 gives an elementary embedding
 \[j : \bar{\mathcal{M}} \to \bar{\mathcal{M}} \text{ with } \alpha < \crit(j) < \bar{\eta}.\]
  
 Because the predicate of $\bar{\mathcal{M}}$ codes the structure $\pi^{-1}(V_\lambda; \mathord{\in},A \cap V_\lambda)$, the map $j \restriction \pi^{-1}(V_\lambda)$ is an elementary embedding from the structure $\pi^{-1}(V_\lambda; \mathord{\in},A \cap V_\lambda)$ to itself with critical point between $\alpha$ and $\bar{\eta}$. By the usual absoluteness lemma, $\bar{\mathcal{M}}$ therefore satisfies the statement ``there is a generic elementary embedding from the structure $\pi^{-1}(V_\lambda; \mathord{\in},A \cap V_\lambda)$ to itself with critical point between $\alpha$ and $\bar{\eta}$.'' By the elementarity of $\pi$, it follows that $\mathcal{M}$ satisfies the statement ``there is a generic elementary embedding from the structure $(V_\lambda; \mathord{\in},A \cap V_\lambda)$ to itself with critical point between $\alpha$ and $\eta$,'' and this statement is absolute to $V$.
 
 Now by replacement there is some cardinal $\kappa$ between $\alpha$ and $\eta$ such that for a proper class of ordinals $\lambda$ there is a generic elementary embedding 
 \[j : (V_\lambda;\mathord{\in}, A\cap V_\lambda) \to (V_\lambda; \mathord{\in}, A \cap V_\lambda) \text{ with } \crit(j) = \kappa.\]
 These generic elementary embeddings and their restrictions to the other rank initial segments of $V$ above $\kappa$ witness the weak virtual $A$-extendibility of $\kappa$.
\end{proof}

It remains to prove Theorem \ref{thm:non-refl-weakly-rmkp-implies-omega-erdos-in-L}.  First we will show that the generic elementary embeddings witnessing weak remarkability of a non-$\Sigma_2$-reflecting cardinal $\kappa$ must fix some ordinal $\beta > \kappa$:
 
\begin{lem}\label{lem:fixed-point}
 Let $\kappa$ be a non-$\Sigma_2$-reflecting weakly remarkable cardinal. Then there is an ordinal $\beta > \kappa$ such that for every ordinal $\lambda > \beta$ there is an ordinal $\bar{\lambda} > \beta$ and a generic elementary embedding $j : V_{\bar{\lambda}} \to V_\lambda$ with $j(\crit(j)) = \kappa$ and $j(\beta) = \beta$.
\end{lem}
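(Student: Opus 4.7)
The plan is to extract a specific $\Sigma_2$ witness from the failure of $\Sigma_2$-reflection at $\kappa$ and take its minimal height in $V$ as the desired fixed point $\beta$. Because $\kappa$ is weakly remarkable it is inaccessible (in fact ineffable, as noted in the introduction), so $V_\kappa \models \ZFC$. Since $\kappa$ is not $\Sigma_2$-reflecting there exist a formula $\psi$ and a parameter $x \in V_\kappa$ such that the $\Sigma_2$ statement $\exists \gamma\, V_\gamma \models \psi[x]$ holds in $V$ but fails in $V_\kappa$. Let $\beta$ be the least ordinal with $V_\beta \models \psi[x]$.

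First I would verify that $\beta > \kappa$. The failure of the $\Sigma_2$ statement in $V_\kappa$ rules out $\beta < \kappa$, since by absoluteness of $V_\beta \models \psi[x]$ such a $\beta$ would already witness the statement inside $V_\kappa$. To rule out $\beta = \kappa$, observe that if $V_\kappa \models \psi[x]$ then, applying the usual reflection theorem inside the $\ZFC$-model $V_\kappa$, there is an ordinal $\bar{\gamma} < \kappa$ with $V_{\bar{\gamma}} \models \psi[x]$, contradicting the minimality of $\beta$.

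Next, given $\lambda > \beta$, I invoke weak $\lambda$-remarkability together with the strengthening noted in the paragraph preceding Theorem~\ref{thm:weakly-remarkable-and-sigma-2-reflecting} (namely that one may arrange $x \in \ran(j)$, which forces $j(x) = x$ because $x \in V_\kappa$) to choose a generic elementary embedding $j : V_{\bar{\lambda}} \to V_\lambda$ with $j(\crit(j)) = \kappa$ and $j(x) = x$. Since $\beta < \lambda$ and $V_\beta \models \psi[x]$, the statement $\exists \gamma\, V_\gamma \models \psi[x]$ holds in $V_\lambda$; by elementarity it holds in $V_{\bar{\lambda}}$ as well, so I let $\bar{\beta}$ denote its least witness there. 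Applying $j$ to the defining property of $\bar{\beta}$ yields $j(\bar{\beta}) = \beta$.

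I then conclude $\bar{\beta} = \beta$, and hence $j(\beta) = \beta$, from two inequalities pointing in opposite directions. The minimality of $\beta$ in $V$, combined with the absoluteness of $V_{\bar{\beta}} \models \psi[x]$ between $V_{\bar{\lambda}}$ and $V$, gives $\beta \le \bar{\beta}$; order-preservation of $j$ on ordinals gives $\bar{\beta} \le j(\bar{\beta}) = \beta$. The required bound $\bar{\lambda} > \beta$ then follows immediately from $\bar{\lambda} > \bar{\beta} = \beta$. The only step requiring genuine care is the exclusion of $\beta = \kappa$ via internal reflection in $V_\kappa$; everything else is a routine combination of elementarity, absoluteness, and the strengthened form of weak $\lambda$-remarkability already established in the discussion preceding Theorem~\ref{thm:weakly-remarkable-and-sigma-2-reflecting}.
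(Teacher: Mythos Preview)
Your proof is correct and follows essentially the same strategy as the paper: take the minimal height $\beta$ of a $\Sigma_2$ witness to the reflection failure, show $\beta > \kappa$ by reflection inside $V_\kappa$, and then use elementarity together with $j(x)=x$ to conclude $j(\beta)=\beta$. The only minor difference is how $j(x)=x$ is secured: you invoke directly the range-enlarging strengthening from the introduction, whereas the paper first minimizes $\beta$ over all parameters in $V_\kappa$, then shows the minimal rank $\xi$ of a suitable parameter is definable from $\kappa$ and $\beta$ and hence lies below $\crit(j)$.
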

\begin{proof}
 Because $\kappa$ is not $\Sigma_2$-reflecting, there is a formula $\varphi$ in the language of set theory, an ordinal $\beta$, and a set $x \in V_\kappa$ such that
 \begin{equation}\label{eqn:witness}
  V_\beta \models \varphi[x] \text{ and } \forall \alpha < \kappa\, V_\alpha \not\models \varphi[x].
 \end{equation}
 (Here we consider $V_\alpha \not\models \varphi[x]$ to include the case $x \notin V_\alpha$.)
 
 Fix a formula $\varphi$ such that \eqref{eqn:witness} holds for some ordinal $\beta$ and some set $x \in V_\kappa$. Define $\beta$ to be the least ordinal such that \eqref{eqn:witness} holds for some set $x \in V_\kappa$. Note that because $\kappa$ is inaccessible we have $V_\alpha \prec V_\kappa$ for a club set of $\alpha < \kappa$, so $\beta \ne \kappa$ and therefore $\beta > \kappa$. Define $\xi < \kappa$ to be the least ordinal such that \eqref{eqn:witness} holds for some set $x$ such that $\rank(x) = \xi$, and fix such a set $x$. Note that the minimality of $\beta$ implies the following strengthening of \eqref{eqn:witness}:
 \begin{equation}\label{eqn:stronger}
  V_\beta \models \varphi[x] \text{ and } \forall \alpha < \beta\, V_\alpha \not\models \varphi[x].
 \end{equation}
 Now let $\lambda > \beta$ be an ordinal. Because $\kappa$ is weakly remarkable, there is an ordinal $\bar{\lambda}$ and a generic elementary embedding
 \[j : V_{\bar{\lambda}} \to V_\lambda \text{ with } j(\bar{\kappa}) = \kappa \text{ where } \bar{\kappa} = \crit(j).\]
 The definition of $\beta$ from $\kappa$ is absolute between $V$ and $V_\lambda$, so by the elementarity of $j$ we have $\beta \in \ran(j)$, say $\beta = j(\bar{\beta})$. Note that 
 \[\bar{\kappa} < \bar{\beta} < \bar{\lambda} \text{ and } \kappa < \beta < \lambda.\]
 The definition of $\xi$ from $\beta$ and $\kappa$ is absolute between $V$ and $V_\lambda$, so by the elementarity of $j$ we have $\xi \in \ran(j)$.  Because $\xi < \kappa$ and $\kappa \cap \ran(j) = \bar{\kappa}$ we have $\xi < \bar{\kappa}$.  Therefore $j(x) = x$, so we have
 \begin{equation}\label{eqn:reflected}
   V_{\bar{\beta}} \models \varphi[x] \text{ and } \forall \alpha < \bar{\beta}\, V_\alpha \not\models \varphi[x]
 \end{equation}
 by the elementarity of $j$ and the fact that \eqref{eqn:stronger} and \eqref{eqn:reflected} are absolute to $V_\lambda$ and $V_{\bar{\lambda}}$ respectively. The conjunction of \eqref{eqn:stronger} and \eqref{eqn:reflected} implies $\bar{\beta} = \beta$, so $\bar{\lambda} > \beta$ and $j(\beta) = \beta$ as desired.
\end{proof}

\begin{rem}
 For any generic elementary embedding $j$ as in the conclusion of Lemma \ref{lem:fixed-point}, the restriction $j \restriction V_\beta$ is a generic elementary embedding from $V_\beta$ to $V_\beta$, so its critical point is by definition a \emph{virtual rank-into-rank cardinal}.  The proof of Lemma \ref{lem:fixed-point} is similar to the proof of existence of virtual rank-into-rank cardinals from a related hypothesis by Bagaria, Gitman, and Schindler \cite[Theorem 5.4]{BagGitSchGenericVopenka}.
\end{rem}

We can use generic elementary embeddings with fixed points to obtain a partition relation in $L$:
 
\begin{lem}
 Let $\kappa$ be a cardinal and let $\beta > \kappa$ be an ordinal such that for every ordinal $\lambda > \beta$ there is an ordinal $\bar{\lambda} > \beta$ and a generic elementary embedding $j : V_{\bar{\lambda}} \to V_\lambda$ such that $j(\crit(j)) = \kappa$ and $j(\beta) = \beta$. Then $\beta \to (\omega)^{\mathord{<}\omega}_\kappa$ in $L$.
\end{lem}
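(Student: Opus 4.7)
The plan is to use the generic embeddings from the hypothesis to produce, in a forcing extension, $\omega$-many Silver-style indiscernibles inside $L_\beta$ below $\beta$, and then invoke Shoenfield absoluteness to obtain homogeneous sets in $L$ for arbitrary colorings.

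First I would fix an arbitrary coloring $f \colon [\beta]^{\mathord{<}\omega} \to \kappa$ in $L$ and choose $\lambda > \beta$ large enough that $f \in L_\lambda$. The hypothesis supplies a generic elementary embedding $j \colon V_{\bar{\lambda}} \to V_\lambda$ with $\bar{\lambda} > \beta$, $j(\bar{\kappa}) = \kappa$, and $j(\beta) = \beta$, where $\bar{\kappa} = \crit(j)$. Restricting to the constructible hierarchy and using $j(\beta) = \beta$ yields a nontrivial elementary self-embedding $j \restriction L_\beta \colon L_\beta \to L_\beta$ in some $V[G]$; by the absoluteness lemma we may take $G$ so that $L_\beta$ becomes countable in $V[G]$.

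From $j \restriction L_\beta$ I would derive the $L_\beta$-amenable ultrafilter $U = \{A \in L_\beta \cap \powerset(\bar{\kappa}) : \bar{\kappa} \in j(A)\}$ and iterate it $\omega$ times in $V[G]$, producing a sequence of generating indiscernibles $\bar{\kappa} = \kappa_0 < \kappa_1 = \kappa < \kappa_2 < \cdots < \beta$ for $L_\beta$. Because $f \in L_\lambda$ and $j$ is elementary on the larger structure $L_{\bar{\lambda}}$, an Ehrenfeucht--Mostowski-style expansion absorbs $f$ as a parameter, yielding an $\omega$-subset of $\beta$ homogeneous for $f$ inside $V[G]$. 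The existence of such a homogeneous set is, after coding $L_\beta$ and $f$ as reals in the countable model $V[G]$, a $\Sigma^1_2$ statement in a parameter lying in $L$; by Shoenfield absoluteness it passes from $V[G]$ down to $L$, yielding $\beta \to (\omega)^{\mathord{<}\omega}_\kappa$ in $L$.

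The main obstacle is verifying well-foundedness of the $\omega$-iterated ultrapowers of $L_\beta$ by the generic ultrafilter $U$: this is not automatic for ultrafilters that only exist generically, but the very existence of $j \restriction L_\beta$ certifies well-foundedness of the first iterate (via a factor map back into $L_\beta$), and absoluteness of well-foundedness then propagates this through the tower, following the pattern of Bagaria--Gitman--Schindler's treatment of virtual rank-into-rank cardinals. A secondary technical point is the EM-expansion to handle $f$ as a parameter, which must be carried out carefully since $f$ need not lie in the fixed part of $j$; this is arranged by exploiting the elementarity of $j$ on all of $L_{\bar{\lambda}}$ together with $j(\beta) = \beta$.
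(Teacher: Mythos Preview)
Your overall strategy---produce a critical sequence below $\beta$ from the self-embedding and then invoke absoluteness to $L$---is the same as the paper's, but there is a genuine gap at the step you flag as ``secondary.'' The critical sequence $\kappa_0 < \kappa_1 < \cdots$ gives indiscernibles for the structure on which the embedding is a \emph{self}-map, and homogeneity for $f$ follows only if $f$ is \emph{fixed} by that embedding. You restrict to $L_\beta$, but an arbitrary $f : [\beta]^{<\omega} \to \kappa$ in $L$ need not lie in $L_\beta$; and on the larger structure $L_{\bar\lambda}$ the map $j$ is not a self-embedding (you have not arranged $\bar\lambda = \lambda$), so you cannot simply iterate there. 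The phrase ``EM-style expansion absorbs $f$ as a parameter'' does not resolve this: adding $f$ to the language requires the embedding to preserve $f$, which is exactly what is at issue, since $f$ takes values in $\kappa$ and the embedding moves ordinals between $\bar\kappa$ and $\kappa$.

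The paper closes this gap with two moves you are missing. First, it chooses $\lambda = |\beta|^{+\omega}$, which forces $\bar\lambda = \lambda$ and hence yields a genuine self-embedding $\ell$ of $L_\gamma$ for $\gamma = (|\beta|^+)^L$, a level large enough to contain any such $f$. Second---and this is the key idea---it argues by contradiction: assuming $\beta \not\to (\omega)^{<\omega}_\kappa$ in $L$, the elementarity of $\ell^2$ (using $\kappa < \ell(\kappa)$) reflects this down to $\beta \not\to (\omega)^{<\omega}_\alpha$ for some $\alpha < \bar\kappa$. Now the $<_L$-least witness $f : [\beta]^{<\omega} \to \alpha$ is definable in $L_\gamma$ from the fixed parameters $\alpha, \beta$, hence $\ell(f) = f$, and Silver's argument applies directly. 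Your direct attack on an arbitrary $\kappa$-coloring bypasses this reflection step, and without it there is no mechanism to make the embedding fix $f$.
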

\begin{proof}
 Let $\lambda = \left|\beta\right|^{+\omega}$, which is more than enough for the following argument.  Take an ordinal $\bar{\lambda} > \beta$ and a generic elementary embedding $j : V_{\bar{\lambda}} \to V_\lambda$ such that, letting $\bar{\kappa} = \crit(j)$, we have $j(\bar{\kappa}) = \kappa$ and $j(\beta) = \beta$.
 For every $n<\omega$ the model $V_{\bar{\lambda}}$ thinks $\left|\beta\right|^{+n}$ exists because $j$ is elementary, and it computes cardinal successors correctly because it is a rank initial segment of $V$, so $\bar{\lambda} = \lambda$.

 Let $\gamma = (\left|\beta\right|^+)^L$ and define $\ell = j \restriction L_\gamma$, which is the only part of $j$ that we will need for the following argument.  Then $\ell$ is a generic elementary embedding and we have
 \[ \ell : L_\gamma \to L_\gamma \text{ and } \crit(\ell) = \bar{\kappa} \text{ and } \ell(\bar{\kappa}) = \kappa \text{ and } \ell(\beta) = \beta.\]
 
 Assume toward a contradiction that $\beta \not\to (\omega)^{\mathord{<}\omega}_\kappa$ in $L$. This assumption is absolute between $L$ and $L_\gamma$ because $\gamma = (\left|\beta\right|^+)^L$, so by the elementarity of $\ell^2$ and the fact that $\kappa < \ell(\kappa) = \ell^2(\bar{\kappa})$, there is some $\alpha < \bar{\kappa}$ such that $\beta \not\to (\omega)^{\mathord{<}\omega}_\alpha$ in $L_\gamma$ and therefore in $L$. Let $f : [\beta]^{\mathord{<}\omega} \to \alpha$ be the $<_L$-least witness to $\beta \not\to (\omega)^{\mathord{<}\omega}_\alpha$ in $L$ and note that this definition of $f$ is absolute between $L$ and $L_\gamma$. Then we have $\ell(f) = f$ because $\ell(\alpha) = \alpha$ and $\ell(\beta) = \beta$ and $f$ is definable from $\alpha$ and $\beta$ in $L_\gamma$. Let $(\kappa_n : n < \omega)$ be the critical sequence of $\ell$, which is defined by $\kappa_n = \ell^n(\bar{\kappa})$ for all $n < \omega$. Then by the elementarity of $\ell$ we have
 \[f(\kappa_0, \ldots, \kappa_{n-1}) = f(\kappa_1,\ldots,\kappa_n)\]
 for every positive integer $n$, so the set $\{\kappa_n : n < \omega\}$ is homogeneous for $f$ by the argument of Silver \cite[\S 2]{SilLargeCardinalInL}. The existence of a homogeneous set for $f$ of order type $\omega$ is absolute to $L$ by the argument of Silver \cite[\S 1]{SilLargeCardinalInL}, but the existence of such a homogeneous set for $f$ in $L$ contradicts our assumption that $f$ is a witness to $\beta \not\to (\omega)^{\mathord{<}\omega}_\alpha$ in $L$. 
\end{proof}

Recall that if $\beta \to (\omega)^{\mathord{<}\omega}_\kappa$ then the least ordinal $\eta$ such that $\eta \to (\omega)^{\mathord{<}\omega}_\kappa$ is an $\omega$-Erd\H{o}s cardinal greater than $\kappa$. Applying this fact in $L$ completes the proof of Theorem \ref{thm:non-refl-weakly-rmkp-implies-omega-erdos-in-L}.

\section{Application to the generic Vop\v{e}nka principle}\label{sec:generic-vopenka}

The \emph{generic Vop\v{e}nka principle}, \emph{gVP}, defined by Bagaria, Gitman, and Schindler \cite{BagGitSchGenericVopenka} says that for every proper class of structures of the same type, there is a generic elementary embedding of one of the structures into another. Gitman and Hamkins \cite[Theorem 7]{GitHamOrdNotDelta2Mahlo} proved that gVP is equivalent to the existence of a proper class of weakly virtually $A$-extendible cardinals for every class $A$ (see Definition \ref{defn:wvAe} above.) They observed that the same proof works in GBC for arbitrary classes and in ZFC for definable classes. Because the proof requires neither the axiom of global choice nor the definability of classes, it works more generally in GB + AC. Combining this result with Lemma \ref{lem:omega-erdos-is-limit-of-non-refl-wvAe}, we immediately obtain the following consequence (which is not difficult to prove directly):

\begin{lem}[GB + AC]\label{lem:pc-omega-erdos-implies-gvp}
 If there is a proper class of $\omega$-Erd\H{o}s cardinals then gVP holds.
\end{lem}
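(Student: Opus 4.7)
The plan is to invoke the equivalence stated in the paragraph preceding the lemma: in GB + AC, gVP holds if and only if for every class $A$ the weakly virtually $A$-extendible cardinals form a proper class. So the task reduces to showing, under the hypothesis that the $\omega$-Erd\H{o}s cardinals form a proper class, that for every class $A$ the weakly virtually $A$-extendible cardinals form a proper class.

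To that end, I would fix an arbitrary class $A$ and an arbitrary ordinal $\alpha$. By hypothesis there is some $\omega$-Erd\H{o}s cardinal $\eta > \alpha$. Applying Lemma \ref{lem:omega-erdos-is-limit-of-non-refl-wvAe} to this $\eta$ and this $A$ yields that $\eta$ is a limit of (non-$\Sigma_2$-reflecting) weakly virtually $A$-extendible cardinals; in particular there is a weakly virtually $A$-extendible cardinal $\kappa$ with $\alpha < \kappa < \eta$. Since $\alpha$ was arbitrary, the weakly virtually $A$-extendible cardinals are unbounded in the ordinals, hence form a proper class. Since $A$ was arbitrary, the Gitman--Hamkins characterization then delivers gVP.

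The proof requires essentially no new ideas because the two inputs have been arranged to fit together directly. The one point worth double-checking is that the Gitman--Hamkins characterization is being applied in the correct axiomatic setting: they stated it for GBC and, for definable classes, for ZFC, but as noted in the paragraph above the lemma their argument uses neither global choice nor the definability of classes, so it applies verbatim in GB + AC. With that observation in hand there is no remaining obstacle, and the lemma follows at once.
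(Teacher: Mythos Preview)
Your proposal is correct and is exactly the argument the paper has in mind: the paper states the lemma as an immediate consequence of combining the Gitman--Hamkins characterization of gVP (in GB + AC) with Lemma~\ref{lem:omega-erdos-is-limit-of-non-refl-wvAe}, and offers no further proof. Your write-up simply makes this combination explicit, including the correct observation about the axiomatic setting.
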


\begin{rem}
 In terms of consistency strength, gVP is weaker than the existence of a single $\omega$-Erd\H{o}s cardinal: the least $\omega$-Erd\H{o}s cardinal is a limit of virtual rank-into-rank cardinals by Gitman and Schindler \cite[Theorem 4.17]{GitSchVirtualLargeCardinals}, and if $\kappa$ is a virtual rank-into-rank cardinal then gVP holds in $V_\kappa$ with respect to its definable subsets by Bagaria, Gitman, and Schindler \cite[Proposition 3.10 and Theorem 5.6]{BagGitSchGenericVopenka}. (In fact it is not difficult to prove directly that if $\kappa$ is a virtual rank-into-rank cardinal then gVP holds in $V_\kappa$ with respect to all of its subsets.)
\end{rem}

If $n$ is a positive integer then \emph{$\text{gVP}({\bfPi}_n)$} is the fragment of the generic Vop\v{e}nka principle asserting that for every ${\bfPi}_n$-definable proper class of structures of the same type, there is a generic elementary embedding of one of the structures into another.
Arguing similarly to Gitman and Hamkins \cite[Theorem 7]{GitHamOrdNotDelta2Mahlo}, we will show that $\text{gVP}({\bfPi}_1)$ is equivalent to the existence of a proper class of weakly remarkable cardinals.

\begin{rem}
 In the non-virtual context Solovay, Reinhardt, and Kanamori \cite[Theorem 6.9]{SolReiKanStrongAxioms} proved that Vop\v{e}nka's principle is equivalent to the existence of an $A$-extendible cardinal for every class $A$, and Bagaria \cite[Corollary 4.7]{BagCnCardinals} proved that the fragment $\text{VP}({\bfPi}_1)$ of Vop\v{e}nka's principle is equivalent to the existence of a proper class of supercompact cardinals. These results use Kunen's inconsistency. In the virtual context Kunen's inconsistency is unavailable, which is why the weak forms of remarkability and virtual $A$-extendibility become relevant.
\end{rem}

\begin{lem}\label{lem:gvp-pi-1-equiv-pc-weakly-rmk}
 The following statements are equivalent.
 \begin{samepage}
 \begin{enumerate}
  \item $\text{gVP}({\bfPi}_1)$.
  \item There is a proper class of weakly remarkable cardinals.
 \end{enumerate}
 \end{samepage}
\end{lem}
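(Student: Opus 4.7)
My plan is to mirror the argument of Gitman and Hamkins \cite[Theorem 7]{GitHamOrdNotDelta2Mahlo} characterizing gVP via proper classes of weakly virtually $A$-extendible cardinals, using the downward absoluteness of $\bfPi_1$ formulas to play the role of the predicate $A$. For the direction $(2) \Rightarrow (1)$, fix a $\bfPi_1$-definable proper class $\mathcal{C}$ of structures of the same type, with defining formula $\varphi(x, p) = \forall y\, \psi(x, y, p)$ ($\psi$ bounded, parameter $p$). I would choose a weakly remarkable cardinal $\kappa$ with $p \in V_\kappa$, a structure $M \in \mathcal{C}$ with $\rank(M) \geq \kappa$, and an ordinal $\lambda > \rank(M)$ in the club class $C^{(1)} = \{\alpha : V_\alpha \prec_{\Sigma_1} V\}$. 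Weak $\lambda$-remarkability of $\kappa$, strengthened by the range condition of \cite[Propositions 2.3 and 3.2]{BagGitSchGenericVopenka}, yields a generic elementary embedding $j : V_{\bar\lambda} \to V_\lambda$ with $j(\crit(j)) = \kappa$, with $p, M \in \ran(j)$ (so $j(p) = p$), and with $M = j(\bar M)$ for some $\bar M$ of rank $\geq \crit(j)$. From $\lambda \in C^{(1)}$ and $M \in \mathcal{C}$ we have $V_\lambda \models \varphi(M, p)$, and elementarity gives $V_{\bar\lambda} \models \varphi(\bar M, p)$. Provided $\bar M \in \mathcal{C}$, the map $j \restriction \bar M$ is a nontrivial generic elementary embedding between two members of $\mathcal{C}$.

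The main obstacle is verifying $\bar M \in \mathcal{C}$, which amounts to lifting $V_{\bar\lambda} \models \varphi(\bar M, p)$ to $V$, i.e., arranging $\bar\lambda \in C^{(1)}$. This is not automatic because $\bar M$ is moved by $j$, so the naive transfer through elementarity gives only correctness about $j(\bar M) = M$, not about $\bar M$ itself. My plan is to overcome this by choosing $\lambda$ to be a limit point of $C^{(1)}$ of cofinality exceeding $\crit(j)$, so that the $V_\lambda$-internal cofinal sequence of $C^{(1)}$-members, transferred down by $j$, is long enough to force $\bar\lambda \in C^{(1)}$. In the case that $\kappa$ is additionally $\Sigma_2$-reflecting, Theorem~\ref{thm:weakly-remarkable-and-sigma-2-reflecting} shows $\kappa$ is actually remarkable, reducing the problem to the remarkable case already handled in \cite{BagGitSchGenericVopenka}. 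The substantively new content concerns weakly remarkable $\kappa$ that are not $\Sigma_2$-reflecting; there the fixed-point techniques of Lemma~\ref{lem:fixed-point} should be exploited to secure the extra reflection.

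For $(1) \Rightarrow (2)$, I would argue by contradiction: assume $\text{gVP}({\bfPi}_1)$ holds yet the weakly remarkable cardinals are bounded by some ordinal $\alpha$. For each cardinal $\kappa > \alpha$, define $\lambda_\kappa$ to be the least ordinal greater than $\kappa$ such that $\kappa$ is not weakly $\lambda_\kappa$-remarkable, and form the structure $\mathcal{M}_\kappa = (V_{\lambda_\kappa + \omega}; \mathord{\in}, \kappa, \lambda_\kappa)$. I would then verify that the class $\{\mathcal{M}_\kappa : \kappa > \alpha \text{ a cardinal}\}$ is ${\bfPi}_1$-definable: the failure of weak $\lambda_\kappa$-remarkability is a $\Pi_1$ property inside $V_{\lambda_\kappa+\omega}$, while the minimality clause asserting weak $\beta$-remarkability of $\kappa$ for all $\beta \in (\kappa, \lambda_\kappa)$ becomes a bounded condition once $\lambda_\kappa$ appears as a distinguished element. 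Applying $\text{gVP}({\bfPi}_1)$ yields a generic elementary embedding $j : \mathcal{M}_{\kappa_1} \to \mathcal{M}_{\kappa_2}$ for some $\alpha < \kappa_1 < \kappa_2$, with $j(\kappa_1) = \kappa_2$ and $j(\lambda_{\kappa_1}) = \lambda_{\kappa_2}$. Exploiting the fact that $\kappa_1$ plays a distinguished role in $\mathcal{M}_{\kappa_1}$ (which together with the elementarity of $j$ should force $\crit(j) = \kappa_1$), the restriction $j \restriction V_{\lambda_{\kappa_1}}$ becomes a generic elementary embedding witnessing weak $\lambda_{\kappa_2}$-remarkability of $\kappa_2$, directly contradicting the definition of $\lambda_{\kappa_2}$.
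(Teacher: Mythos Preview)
Your direction $(2)\Rightarrow(1)$ has the right skeleton, but the ``obstacle'' you flag is not one. The condition $\lambda\in C^{(1)}$ is equivalent to $V_\lambda=H_\lambda$, and the latter is a first-order sentence about $V_\lambda$ (``every set is in bijection with an ordinal''); by the elementarity of $j$ it holds in $V_{\bar\lambda}$ as well, so $\bar\lambda\in C^{(1)}$ immediately. This is exactly the paper's one-line move, after which the $\Pi_1$ definition of $\mathcal{C}$ (and of the rank-enumerating function $F$) is absolute to both $V_\lambda$ and $V_{\bar\lambda}$; one then chooses $M\in\mathcal{C}$ of rank $F(\bar\kappa)$ inside $V_{\bar\lambda}$ and pushes it forward to $j(M)$ of strictly larger rank $F(\kappa)$. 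Your cofinality idea would also eventually yield $\bar\lambda\in C^{(1)}$, but the proposed case-split through Theorem~\ref{thm:weakly-remarkable-and-sigma-2-reflecting} and Lemma~\ref{lem:fixed-point} is unnecessary, and the non-$\Sigma_2$-reflecting branch is left as a hope rather than an argument.

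Your direction $(1)\Rightarrow(2)$ has a genuine gap. Naming $\kappa$ as a constant in $\mathcal{M}_\kappa$ forces only $j(\kappa_1)=\kappa_2$; it does \emph{not} force $\crit(j)=\kappa_1$. If $\bar\kappa=\crit(j)<\kappa_1$, the restriction $j\restriction V_{\lambda_{\kappa_1}}$ witnesses merely that $j(\bar\kappa)$ is weakly $\lambda_{\kappa_2}$-remarkable, which is no contradiction: the failure ordinal for $j(\bar\kappa)$ is $\lambda_{j(\bar\kappa)}$, not $\lambda_{\kappa_2}$, and $\lambda_{\bar\kappa}$ need not even lie below $\lambda_{\kappa_1}+\omega$ (the map $\kappa\mapsto\lambda_\kappa$ is not monotone). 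You are also missing constants for all $\xi\le\alpha$, without which $\crit(j)$ could fall below $\alpha$. The paper repairs both issues simultaneously: it indexes structures by an arbitrary ordinal $\beta>\alpha$, adds constants for every $\xi\le\alpha$, and takes the underlying set to be $V_{g(\beta)+\omega}$ where $g(\beta)=\sup\{f(\kappa):\alpha<\kappa\le\beta\}$ and $f(\kappa)$ is the least failure ordinal. Then whatever $\bar\kappa=\crit(j)\in(\alpha,\bar\beta]$ turns out to be, one has $f(\bar\kappa)\le g(\bar\beta)$ and $j(f(\bar\kappa))=f(j(\bar\kappa))$ by absoluteness of $f$ to the structure, so $j\restriction V_{f(\bar\kappa)}$ directly contradicts the definition of $f$ at $j(\bar\kappa)$.
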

\begin{proof}
 Assume $\text{gVP}({\bfPi}_1)$ and let $\alpha$ be a cardinal.  We will show there is a weakly remarkable cardinal greater than $\alpha$. Assume not, toward a contradiction. Then for every ordinal $\kappa > \alpha$ we may define $f(\kappa)$ to be the least ordinal $\lambda > \kappa$ such that $\kappa$ is not weakly $\lambda$-remarkable. (If $\kappa$ is not a cardinal, then $f(\kappa) = \kappa+1$.) For every ordinal $\beta > \alpha$, let
 \[g(\beta) = \sup\{f(\kappa) : \alpha < \kappa \le \beta\}.\]
 Consider the proper class of structures $\mathcal{C} = \{\mathcal{M}_\beta : \beta > \alpha\}$ where 
 \[\mathcal{M}_\beta = (V_{g(\beta) + \omega}; \mathord{\in}, \beta, \xi)_{\xi \le \alpha}.\] 
 The class $\mathcal{C}$ is $\Pi_1(\alpha)$, so by $\text{gVP}({\bfPi}_1)$ there are two distinct structures $\mathcal{M}_{\bar{\beta}}$ and $\mathcal{M}_\beta$ in $\mathcal{C}$ and a generic elementary embedding 
 \[j : \mathcal{M}_{\bar{\beta}} \to \mathcal{M}_\beta.\]
 We have $j(\bar{\beta}) = \beta$ and $j(\xi) = \xi$ for all $\xi \le \alpha$, so letting $\bar{\kappa} = \crit(j)$ and  $\kappa = j(\bar{\kappa})$ we have $\alpha < \bar{\kappa} \le \bar{\beta}$ and $\alpha < \kappa \le \beta$. Then we have $f(\bar{\kappa}) \le g(\bar{\beta})$ and $f(\kappa) \le g(\beta)$ by the definition of $g$ from $f$, and we have $j(f(\bar{\kappa})) = f(\kappa)$ because the definition of $f$ is absolute to $\mathcal{M}_{\bar{\beta}}$ and $\mathcal{M}_\beta$. Therefore the restriction $j \restriction V_{f(\bar{\kappa})}$ is defined and is a generic elementary embedding from $V_{f(\bar{\kappa})}$ to $V_{f(\kappa)}$ witnessing that $\kappa$ is weakly $f(\kappa)$-remarkable, contradicting the definition~of~$f$.

 Conversely, assume there is a proper class of weakly remarkable cardinals and let $\mathcal{C}$ be a ${\bfPi}_1$ proper class of structures of the same type $\tau$. Then $\mathcal{C}$ is $\Pi_1(x)$ for some set $x$.  Take a weakly remarkable cardinal $\kappa$ such that $\tau, x \in V_\kappa$. Let $F : \Ord \to \Ord$ be the strictly increasing enumeration of the class of ordinals $\{\rank(\mathcal{M}) : \mathcal{M} \in \mathcal{C}\}$ and take an ordinal $\lambda > F(\kappa)$ such that $\lambda \in C^{(1)}$, where $C^{(1)}$ denotes the class of all ordinals $\lambda$ such that $V_\lambda \prec_{\Sigma_1} V$. Bagaria \cite{BagCnCardinals} showed that $C^{(1)}$ is equal to the class of all uncountable cardinals $\lambda$ such that $V_\lambda = H_\lambda$.
 
 Because $\kappa$ is weakly remarkable, there is an ordinal $\bar{\lambda}$ and a generic elementary embedding 
 \[j : V_{\bar{\lambda}} \to V_\lambda \text{ with } j(\bar{\kappa}) = \kappa \text{ where } \bar{\kappa} = \crit(j).\]
 We may assume that $\tau$ and $x$ are in the range of $j$ because the generic embeddings witnessing weak remarkability may be taken to contain any finitely many given elements in their range (by the same proof as for remarkability, as cited in the introduction.) Because $\tau$ and $x$ are in the set $V_\kappa \cap \ran(j)$, which is equal to $V_{\bar{\kappa}}$, they are fixed by $j$.
 
 We have $V_{\bar{\lambda}} = H_{\bar{\lambda}}$ by the elementarity of $j$, so $\bar{\lambda} \in C^{(1)}$ also. Therefore the definitions of the class $\mathcal{C}$ and the class function $F$ from $x$ are absolute to $V_{\bar{\lambda}}$ as well as to $V_\lambda$, so by the elementarity of $j$ and the fact that $\lambda > F(\kappa)$ it follows that $\bar{\lambda} > F(\bar{\kappa})$. Take $\mathcal{M} \in \mathcal{C} \cap V_{\bar{\lambda}}$ with $\rank(\mathcal{M}) = F(\bar{\kappa})$. Then $j(\mathcal{M}) \in \mathcal{C} \cap V_{\lambda}$ and we have
 \[\rank(\mathcal{M}) = F(\bar{\kappa}) < F(\kappa) = \rank(j(\mathcal{M})),\] so $\mathcal{M} \ne j(\mathcal{M})$. Because the type $\tau$ of the structure $\mathcal{M}$ is fixed by $j$, the restriction $j \restriction \mathcal{M}$ is a generic elementary embedding from $\mathcal{M}$ to $j(\mathcal{M})$ as desired.
\end{proof}

We now easily obtain the following consequence, which extends results of Bagaria, Gitman, and Schindler \cite{BagGitSchGenericVopenka} as well as Gitman and Hamkins \cite{GitHamOrdNotDelta2Mahlo} (see Remark \ref{rem:previous-work} below.)

\begin{thm}\label{thm:gvp-equiconsistency}
 The following theories are equiconsistent:
 \begin{samepage}
 \begin{enumerate}
  \item\label{item:pc-omega-erdos-2} ZFC + there is a proper class of $\omega$-Erd\H{o}s cardinals.
  \item\label{item:gvp-ord-not-delta-2-mahlo} GBC + gVP + ``Ord is not $\Delta_2$-Mahlo.''
  \item\label{item:gvp-pi-1-no-pc-rmk} ZFC + $\text{gVP}({\bfPi}_1)$ + ``there is no proper class of remarkable cardinals.''
 \end{enumerate}
 \end{samepage}
\end{thm}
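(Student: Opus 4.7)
The plan is to close the cycle $\Con(\ref{item:pc-omega-erdos-2}) \Rightarrow \Con(\ref{item:gvp-pi-1-no-pc-rmk}) \Rightarrow \Con(\ref{item:pc-omega-erdos-2})$ and the analogous cycle $\Con(\ref{item:pc-omega-erdos-2}) \Rightarrow \Con(\ref{item:gvp-ord-not-delta-2-mahlo}) \Rightarrow \Con(\ref{item:pc-omega-erdos-2})$, using the results already proved in the paper together with the Gitman--Hamkins characterization of gVP. For $\Con(\ref{item:pc-omega-erdos-2}) \Rightarrow \Con(\ref{item:gvp-pi-1-no-pc-rmk})$, I would start from a model $V$ of ZFC with a proper class of $\omega$-Erd\H{o}s cardinals, and if $V$ contains a $\Sigma_2$-reflecting cardinal replace $V$ by $V_{\lambda_0}$ where $\lambda_0$ is the least such. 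Since ``there is an $\omega$-Erd\H{o}s cardinal above $\alpha$'' is $\Sigma_2$ and $V_{\lambda_0}\prec_{\Sigma_2} V$, the resulting model still contains a proper class of $\omega$-Erd\H{o}s cardinals; by the minimality of $\lambda_0$ it has no $\Sigma_2$-reflecting cardinal, and hence no remarkable cardinal. Theorem~\ref{thm:omega-erdos-is-limit-of-non-refl-weakly-rmk} supplies, below each $\omega$-Erd\H{o}s cardinal, a weakly remarkable cardinal, yielding a proper class of them, and Lemma~\ref{lem:gvp-pi-1-equiv-pc-weakly-rmk} then gives $\gVP(\bfPi_1)$.

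For $\Con(\ref{item:gvp-pi-1-no-pc-rmk}) \Rightarrow \Con(\ref{item:pc-omega-erdos-2})$ I would assume $\gVP(\bfPi_1)$ together with ``no proper class of remarkable cardinals'' and apply Lemma~\ref{lem:gvp-pi-1-equiv-pc-weakly-rmk} to extract a proper class of weakly remarkable cardinals. By Theorem~\ref{thm:weakly-remarkable-and-sigma-2-reflecting} the $\Sigma_2$-reflecting ones are precisely the remarkable cardinals, which are bounded by hypothesis, so all weakly remarkable cardinals above that bound are non-$\Sigma_2$-reflecting and still form a proper class. Theorem~\ref{thm:non-refl-weakly-rmkp-implies-omega-erdos-in-L} then produces an $\omega$-Erd\H{o}s cardinal of $L$ above each of them, yielding a proper class of $\omega$-Erd\H{o}s cardinals in $L$.

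The (2)-arm is parallel in spirit. For $\Con(\ref{item:pc-omega-erdos-2}) \Rightarrow \Con(\ref{item:gvp-ord-not-delta-2-mahlo})$ I would repeat the cutoff to $V_{\lambda_0}$ and view $V_{\lambda_0}$, together with its definable classes, as a model of GBC; Lemma~\ref{lem:pc-omega-erdos-implies-gvp} delivers gVP, while the absence of $\Sigma_2$-reflecting cardinals should force ``Ord is not $\Delta_2$-Mahlo'' by the standard observation (implicit in Gitman--Hamkins) that a $\Delta_2$-Mahlo Ord produces $\Sigma_2$-reflecting cardinals. For $\Con(\ref{item:gvp-ord-not-delta-2-mahlo}) \Rightarrow \Con(\ref{item:pc-omega-erdos-2})$ I would invoke the Gitman--Hamkins equivalence of gVP with ``a proper class of weakly virtually $A$-extendible cardinals for every class $A$,'' use a class $A$ coding a $\Delta_2$-definable club $C$ that witnesses the failure of $\Delta_2$-Mahloness to extract from this proper class a proper subclass of non-$\Sigma_2$-reflecting weakly virtually extendible cardinals, observe that these are weakly remarkable (by the lemma just before Lemma~\ref{lem:omega-erdos-is-limit-of-non-refl-wvAe}), and apply Theorem~\ref{thm:non-refl-weakly-rmkp-implies-omega-erdos-in-L} to obtain a proper class of $\omega$-Erd\H{o}s cardinals in $L$.

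The hardest step will be the last: extracting a \emph{proper class} of non-$\Sigma_2$-reflecting weakly virtually $A$-extendible cardinals from the failure of $\Delta_2$-Mahloness, rather than merely one. My plan is to let $A$ code the witnessing $\Delta_2$-definable club $C$ and exploit that for a weakly virtually $A$-extendible $\kappa$ the generic embeddings $j\colon(V_\lambda;\mathord{\in},A\cap V_\lambda)\to(V_\theta;\mathord{\in},A\cap V_\theta)$ must preserve $C$; this should trap $\kappa$ inside $C$ (or force it to share the non-reflecting behavior of the ordinals in $C$), ruling out $\Sigma_2$-reflection. A replacement argument above any proposed bound then yields the required proper class.
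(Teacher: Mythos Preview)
Your cycle $\Con(\ref{item:pc-omega-erdos-2}) \Leftrightarrow \Con(\ref{item:gvp-pi-1-no-pc-rmk})$ is correct and matches the paper's argument essentially verbatim. The problems are all in the $(\ref{item:gvp-ord-not-delta-2-mahlo})$-arm.

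\textbf{The step $\Con(\ref{item:pc-omega-erdos-2}) \Rightarrow \Con(\ref{item:gvp-ord-not-delta-2-mahlo})$ has two genuine gaps.} First, cutting at the least $\Sigma_2$-reflecting cardinal $\lambda_0$ and taking definable classes gives a model of GB + AC, not GBC: nothing in your construction supplies a global well-ordering. The paper handles this by first passing to $L$ (where $V=L$ gives global choice for free) and only then cutting. Second, and more seriously, the implication you invoke is backwards. The ``standard observation'' is that the existence of a $\Sigma_2$-reflecting cardinal implies that Ord is $\Delta_2$-Mahlo (any $\Sigma_2$-reflecting $\kappa$ lies in every $\Delta_2$ club), not the converse. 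From ``no $\Sigma_2$-reflecting cardinal'' you cannot conclude ``Ord is not $\Delta_2$-Mahlo'' without further work. The paper instead produces an explicit witness: it cuts $L$ at the least \emph{inaccessible limit of $\omega$-Erd\H{o}s cardinals} (if one exists), so that in the resulting model the class of limits of $\omega$-Erd\H{o}s cardinals is a $\Delta_2$-definable club consisting entirely of singular cardinals. Your cutoff at $\lambda_0$ does not rule out an inaccessible limit of $\omega$-Erd\H{o}s cardinals below $\lambda_0$, so this witness is unavailable to you.

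\textbf{The step $\Con(\ref{item:gvp-ord-not-delta-2-mahlo}) \Rightarrow \Con(\ref{item:pc-omega-erdos-2})$ is much harder than it needs to be.} The paper does not argue this directly at all: it observes that theory~\ref{item:gvp-ord-not-delta-2-mahlo} outright implies theory~\ref{item:gvp-pi-1-no-pc-rmk} in the first-order part, since $\gVP(\bfPi_1)$ is a fragment of gVP and (using the correct direction of the implication above) ``Ord is not $\Delta_2$-Mahlo'' entails there are no $\Sigma_2$-reflecting cardinals, hence no remarkable cardinals. Then $\Con(\ref{item:gvp-pi-1-no-pc-rmk}) \Rightarrow \Con(\ref{item:pc-omega-erdos-2})$ finishes. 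Your ``hardest step'' evaporates once you use the implication in the right direction: if Ord is not $\Delta_2$-Mahlo then there are no $\Sigma_2$-reflecting cardinals at all, so \emph{every} weakly virtually extendible (hence weakly remarkable) cardinal is automatically non-$\Sigma_2$-reflecting, and no argument about trapping $\kappa$ in the club $C$ is needed.
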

\begin{proof}
 $\Con(\ref{item:pc-omega-erdos-2})$ implies $\Con(\ref{item:gvp-ord-not-delta-2-mahlo})$: Assume that there is a proper class of $\omega$-Erd\H{o}s cardinals. The $\omega$-Erd\H{o}s property is downward absolute to $L$ by the argument of Silver \cite[\S 1]{SilLargeCardinalInL}, so there is a proper class of $\omega$-Erd\H{o}s cardinals in $L$. If there is an inaccessible limit of $\omega$-Erd\H{o}s cardinals in $L$, let $\lambda$ be the least such and let $M = V_\lambda^L$; otherwise let $M = L$. Because $M$ satisfies ``$V = L$'' it satisfies global choice with respect to its definable classes. Because $M$ satisfies ``there is a proper class of $\omega$-Erd\H{o}s cardinals'' it satisfies gVP with respect to its definable classes by Lemma \ref{lem:pc-omega-erdos-implies-gvp}. Finally, in $M$ the class of limits of $\omega$-Erd\H{o}s cardinals is a $\Delta_2$-definable club class of singular cardinals by our choice of $\lambda$, so $M$ satisfies ``Ord is not $\Delta_2$-Mahlo.''
 
 If theory \ref{item:gvp-ord-not-delta-2-mahlo} holds then theory \ref{item:gvp-pi-1-no-pc-rmk} holds in the first-order part of the universe because $\text{gVP}({\bfPi}_1)$ is a fragment of gVP, remarkable cardinals are $\Sigma_2$-reflecting, and the existence of a $\Sigma_2$-reflecting cardinal implies that Ord is $\Delta_2$-Mahlo.

 $\Con(\ref{item:gvp-pi-1-no-pc-rmk})$ implies $\Con(\ref{item:pc-omega-erdos-2})$: If theory \ref{item:gvp-pi-1-no-pc-rmk} holds then by Lemma \ref{lem:gvp-pi-1-equiv-pc-weakly-rmk} there is a proper class of weakly remarkable cardinals that are not remarkable, and therefore are not $\Sigma_2$-reflecting by Theorem \ref{thm:weakly-remarkable-and-sigma-2-reflecting}, so there is a proper class of $\omega$-Erd\H{o}s cardinals in $L$ by Theorem \ref{thm:non-refl-weakly-rmkp-implies-omega-erdos-in-L}.
\end{proof}

\begin{rem}\label{rem:previous-work}
 Bagaria, Gitman, and Schindler \cite[Theorem 5.4(2)]{BagGitSchGenericVopenka} proved that theory \ref{item:gvp-pi-1-no-pc-rmk} implies the existence of a proper class of virtual rank-into-rank cardinals and asked whether theory \ref{item:gvp-pi-1-no-pc-rmk} is consistent. Gitman and Hamkins \cite[Theorem 12]{GitHamOrdNotDelta2Mahlo} proved that the consistency strength of theory \ref{item:gvp-ord-not-delta-2-mahlo} (and therefore also of theory \ref{item:gvp-pi-1-no-pc-rmk}) is less than $0^\sharp$. In particular they proved that if $0^\sharp$ exists then theory \ref{item:gvp-ord-not-delta-2-mahlo} holds in a generic extension of $L$ (and therefore also in a generic extension of $L_\alpha$ for every Silver indiscernible $\alpha$) by a definable class forcing. In terms of consistency strength, the existence of even a single $\omega$-Erd\H{o}s cardinal is stronger than the existence of a proper class of virtual rank-into-rank cardinals because if $\eta$ is $\omega$-Erd\H{o}s then $V_\eta$ satisfies ZFC + ``there is a proper class of virtual rank-into-rank cardinals.''
\end{rem}

Various other theories may be interposed between theories \ref{item:gvp-ord-not-delta-2-mahlo} and  \ref{item:gvp-pi-1-no-pc-rmk} in Theorem \ref{thm:gvp-equiconsistency}, such as $\text{gVP}$ (or $\text{gVP}({\bfPi}_1)$) + ``there is no $\Sigma_2$-reflecting cardinal'' (or ``there is no remarkable cardinal.'') Such theories are therefore also equiconsistent with the existence of a proper class of $\omega$-Erd\H{o}s cardinals.

\section{Acknowledgments}

The author thanks Joel Hamkins for suggesting the use of G\"{o}del--Bernays set theory to state Lemmas \ref{lem:omega-erdos-is-limit-of-non-refl-wvAe} and \ref{lem:pc-omega-erdos-implies-gvp} in greater generality.


\end{document}